\setlist[itemize]{noitemsep, topsep=1pt, leftmargin=30pt}
\newcommand\bcdot{\ensuremath{
  \mathchoice
   {\mskip\thinmuskip\lower0.2ex\hbox{\scalebox{1.6}{$\cdot$}}\mskip\thinmuskip}}
   {\mskip\thinmuskip\lower0.2ex\hbox{\scalebox{1.6}{$\cdot$}}\mskip\thinmuskip}
   {\lower0.3ex\hbox{\scalebox{1.2}{$\cdot$}}}
   {\lower0.3ex\hbox{\scalebox{1.2}{$\cdot$}}}
}
\theoremstyle{plain}
\newtheorem{theo}{Theorem}[section]
\newtheorem{prop}[theo]{Proposition}
\theoremstyle{definition}
\newtheorem{rem}[theo]{Remark}
\theoremstyle{plain}
\newtheorem{thmint}{Theorem}
\theoremstyle{plain}
\newtheorem{lemma}[theo]{Lemma}
\newtheorem{theorem}[theo]{Theorem}
\newtheorem{corollary}[theo]{Corollary}
\newtheorem{proposition}[theo]{Proposition}
\theoremstyle{definition}
\renewcommand{\=}{:=}
\renewcommand{\a}{\alpha}
\renewcommand{\d}{\delta}
\newcommand{\e}{\varepsilon}
\newcommand{\f}{\varphi}
\newcommand{\g}{\gamma}
\renewcommand{\l}{\lambda}
\newcommand{\w}{\omega}
\newcommand{\q}{\vartheta}
\newcommand{\Q}{\Theta}
\newcommand{\W}{\Omega}
\newcommand{\bR}{\mathbb{R}}
\newcommand{\bZ}{\mathbb{Z}}
\newcommand{\bN}{\mathbb{N}}
\newcommand{\fG}{\mathsf{G}}
\newcommand{\fH}{\mathsf{H}}
\renewcommand{\gg}{\mathfrak{g}}
\newcommand{\gh}{\mathfrak{h}}
\newcommand{\cA}{\mathcal{A}}
\newcommand{\cC}{\mathcal{C}}
\newcommand{\cI}{\mathcal{I}}
\newcommand{\eA}{\EuScript{A}}
\newcommand{\eB}{\EuScript{B}}
\newcommand{\eD}{\EuScript{D}}
\newcommand{\eJ}{\EuScript{J}}
\newcommand{\eU}{\EuScript{U}}
\newcommand{\eV}{\EuScript{V}}
\newcommand{\eW}{\EuScript{W}}
\newcommand{\p}{\partial}
\newcommand{\rar}{\rightarrow}
\renewcommand{\square}{\kern1pt\vbox
{\hrule height 0.6pt\hbox{\vrule width 0.6pt\hskip 3pt \vbox{\vskip
6pt}\hskip 3pt\vrule width 0.6pt}\hrule height0.6pt}\kern1pt}
\renewcommand{\=}{\  \raisebox{0.15mm}{:} {=} \ }
\DeclareMathOperator\Lie{Lie}
\DeclareMathOperator\diff{d}
\DeclareMathOperator\Ad{Ad}
\DeclareMathOperator\Id{Id}
\DeclareMathOperator\Diff{Diff}
\newcommand{\td}{\mathtt{d}}
\newcommand{\wt}{\widetilde}
\newcommand{\zero}{\operatorname{o}}
\def\<#1,#2>{\langle\,#1,\,#2\,\rangle}
\newcommand{\aac}{\`a}
\newcommand{\Aac}{\`A}
\newcommand{\Math}{{\it Mathematica\raise5 pt\hbox{$\scriptscriptstyle \circledR$}7}}
\newcommand{\beq}{\begin{equation}}
\newcommand{\eeq}{\end{equation}}
\def\<#1,#2>{\langle\,#1,\,#2\,\rangle}
\newcommand{\arr}{\begin{array}{rlll}}
\newcommand{\ea}{\end{array}}
\newcommand{\bea}{\begin{eqnarray}}
\newcommand{\eea}{\end{eqnarray}}
\newcommand{\bean}{\begin{eqnarray*}}
\newcommand{\eean}{\end{eqnarray*}}
\def\sideremark#1{\ifvmode\leavevmode\fi\vadjust{
\vbox to0pt{\hbox to 0pt{\hskip\hsize\hskip1em
\vbox{\hsize3cm\tiny\raggedright\pretolerance10000
\noindent #1\hfill}\hss}\vbox to8pt{\vfil}\vss}}}
\newcounter{ssig}
\newcounter{ttig}
\newcommand{\Iso}{\operatorname{Iso}}
\title[A local version of the Myers-Steenrod Theorem]{A local version of the Myers-Steenrod Theorem}
\author{Francesco Pediconi}
\subjclass[2010]{58D05, 57S05, 53C30}
\keywords{Lie transformation groups, Myers-Steenrod Theorem, locally homogeneous spaces}
\thanks{This work was supported by GNSAGA of INdAM}
\begin{document}
\begin{abstract}
We prove the Myers-Steenrod Theorem for local topological groups of isometries acting on pointed $\cC^{k,\a}$-Riemannian manifolds, with $k+\a>0$. As an application, we infer a new regularity result for a certain class of locally homogeneous Riemannian metrics. \end{abstract}

\maketitle


\section{Introduction} \setcounter{equation} 0

In this paper we give a characterization of local groups of isometries that admit structures of local Lie transformation groups. More precisely, we prove the following

\begin{thmint} Any locally compact and effective local topological group of isometries acting on a pointed $\cC^{k,\a}$-Riemannian manifold, with $k+\a>0$, is a local Lie group of isometries. \label{MAIN1} \end{thmint}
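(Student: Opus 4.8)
The plan is to follow the architecture of the classical Myers--Steenrod theorem --- realise the group inside the orthonormal frame bundle by exploiting the rigidity of isometries, and then feed the resulting object into the structure theory of locally compact groups --- while replacing every global step by its local counterpart and absorbing the low regularity of the metric through the regularity theory for metric isometries. Write $(M,g,p)$ for the pointed manifold and let $\fG$ denote the local topological group of isometries under consideration; after shrinking a neighbourhood of the identity in $\fG$ and of $p$ in $M$, I may assume every $\f\in\fG$ is a distance-preserving homeomorphism defined on a fixed neighbourhood $V$ of $p$. The first step is that each such $\f$ is in fact a $\cC^{k+1,\a}$-diffeomorphism onto its image: written in harmonic coordinates an isometry solves an elliptic system, and the regularity theory for metric isometries (Myers--Steenrod, Calabi--Hartman, Taylor) --- which in the H\"older scale is available precisely when $k+\a>0$ --- gives this regularity together with a unique continuation property, whence the \emph{rigidity} of local isometries: a local isometry fixing $p$ with differential $\Id$ at $p$ coincides with $\Id$ near $p$. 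Equivalently, the induced local action of $\fG$ on the bundle $\Fr(M)$ of $g$-orthonormal frames is free.

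Next I would construct the frame-bundle embedding. Fix $u_0\in\Fr(M)$ over $p$ and put $\F\colon\fG\to\Fr(M)$, $\F(\f)\=(\diff\f)_p(u_0)$; this map is $\fG$-equivariant and, by rigidity, injective. One then checks that $\F$ is a homeomorphism onto its image: uniform interior $\cC^{1,\a}$-estimates coming from the first step show that compact-open convergence in $\fG$ forces convergence of the $1$-jets at $p$ (continuity of $\F$), while the equicontinuity of $\fG$ --- each element is $1$-Lipschitz for the Riemannian distance, which in a fixed chart is bi-Lipschitz to the Euclidean one --- together with rigidity shows that convergence of the $1$-jets at $p$ forces uniform convergence on $V$ (continuity of $\F^{-1}$). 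Since $\fG$ is assumed locally compact, $\Sigma\=\F(\fG)$ is a locally compact, hence locally closed, subset of the manifold $\Fr(M)$, and the partially defined composition of isometries makes $\Sigma$ a locally compact local topological group sitting topologically inside a finite-dimensional manifold.

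The third step is the passage to a local Lie group. I would first show that $\fG$ has no small subgroups. A subgroup $H\leq\fG$ contained in a sufficiently small neighbourhood of $e$ may be taken compact (replace it by its closure), and it admits a common fixed point $q$ close to $p$ --- obtained as the circumcentre, or centre of mass, of an $H$-orbit, which exists and is unique by the convexity properties of small metric balls available in the regularity range $k+\a>0$; by rigidity $H$ then injects into $\O(T_qM)\cong\O(n)$ with image inside a preassigned neighbourhood of $\Id$, so that, $\O(n)$ having no small subgroups, $H=\{e\}$. With the no-small-subgroups property in hand, the local version of the Gleason--Yamabe theorem (Jacoby's theorem on locally compact local groups, together with the corrections of Goldbring and of van den Dries--Goldbring) implies that $\fG$ is a local Lie group; since $\fG$ then acts continuously and effectively on $M$ by $\cC^{k+1,\a}$-diffeomorphisms, the (local) Bochner--Montgomery theorem upgrades this action to a smooth one, so that $\fG$ is a local Lie group \emph{of isometries}.

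The genuinely delicate part is the third step in the \emph{local} category: both the no-small-subgroups argument and the Gleason--Yamabe and Bochner--Montgomery machinery manipulate powers, commutators and one-parameter subgroups that are only partially defined, so one must continually certify that every product formed remains within the domain of the local multiplication; the secondary difficulty is propagating the low-regularity bookkeeping --- keeping the constants in the equicontinuity and $\cC^{1,\a}$-estimates, and in the convexity of small balls, uniform over the neighbourhoods that are repeatedly shrunk.
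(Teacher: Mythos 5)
Your overall architecture (regularity of isometries, then no small subgroups, then Goldbring's local Hilbert fifth problem, then local Bochner--Montgomery) coincides with the paper's, but the way you establish the NSS property rests on two unjustified claims, and they are precisely the ones that are not available in the regularity range $k+\a>0$. First, the \emph{rigidity} statement --- a local isometry fixing $q$ with differential $\Id_{T_qM}$ is the identity near $q$ --- is not a consequence of the Calabi--Hartman/Re\v{s}etnjak/Taylor regularity theory you cite; classically it is proved by intertwining the isometry with the exponential map, which requires existence \emph{and uniqueness} of geodesics. For a $\cC^{0,\a}$ (or even $\cC^{1,\a}$, $\a<1$) metric the geodesic equation has merely H\"older or distributional coefficients and uniqueness can fail, so neither the injectivity of your frame-bundle map $\F$ nor the faithfulness of the isotropy representation of $H$ at $q$ is established. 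Second, the common fixed point of a small compact subgroup obtained as a circumcentre or centre of mass requires convexity of small metric balls (or a local CAT($\kappa$) comparison), which is a consequence of curvature or $\cC^{1,1}$ bounds and is not known for H\"older metrics. Without these two facts your chain ``small subgroup $\Rightarrow$ common fixed point $\Rightarrow$ injects into $\O(n)$ $\Rightarrow$ trivial'' breaks at both of its first two arrows.

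The paper circumvents both difficulties by forgetting the metric altogether once Theorem \ref{Isoreg} has upgraded the elements of $\fG$ to $\cC^1$-diffeomorphisms: in Proposition \ref{prop2} a subgroup $\fH$ contained in a small neighbourhood of $e$ is replaced by its (compact) closure and its action is averaged against the Haar measure, $T(x)=\int_{\fH}\Q(a,x)\,d\l(a)$, in a fixed coordinate ball; the Montgomery and Bochner--Montgomery continuity of the derivatives in the group variable makes $T$ a $\cC^1$ map uniformly close to the identity, hence invertible by the Inverse Function Theorem, and bi-invariance of $\l$ gives $T\circ\Q(b)=T$, so $\Q(b)=\Id$ near the origin and effectiveness kills $\fH$. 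This yields NSS for \emph{any} locally compact effective local group of $\cC^1$-transformations, with no rigidity, no fixed-point construction and no geometry of the distance function; the only metric input in the whole proof is Theorem \ref{Isoreg}. To salvage your route you would have to either prove unique continuation for isometries of $\cC^{0,\a}$ metrics (an independent and nontrivial assertion) or replace your fixed-point and isotropy steps by such an averaging argument --- at which point the frame-bundle embedding becomes superfluous as well.
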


Our result can be considered as a local version of the Myers-Steenrod Theorem \cite{MS}. We recall that the most enhanced version of this result is actually a consequence of the celebrated Gleason, Montgomery and Zippin solution to the Hilbert fifth problem \cite{Gle, MZ1}: \begin{itemize}[leftmargin=37pt]
\item[(H5)] {\it A locally compact topological group admits a Lie group structure if and only if it is locally Euclidean, and this occurs if and only if it has no small subgroups.}
\end{itemize} Note that (H5) is a characterization of the Lie groups among all topological groups in terms of just group theory and topology. It was thus natural to expect that a similar property holds for local Lie groups too. However, such a result was proved only recently by Goldbring in \cite{Go} using techniques from non-standard Analysis. The proof of our Theorem \ref{MAIN1} is strongly based on Goldbring's Theorem. \smallskip

For a better understanding of our result, it is convenient to briefly review the relations between the solution to the Hilbert fifth problem (H5) and the various known versions of the Myers-Steenrod Theorem. We start recalling that the original paper \cite{MS} contains the following two results: \begin{itemize}[leftmargin=37pt]
\item[(MS1)] {\it Any distance preserving map between $\cC^k$-Riemannian manifolds, with $k\geq 2$, is of class $\cC^{k-1}$;}
\item[(MS2)] {\it Any closed group of isometries acting on a $\cC^k$-Riemannian manifold, with $k \geq 2$, is a Lie transformation group.}
\end{itemize} Subsequently, the works by Calabi and Hartman \cite{CH}, Re$\check{\rm s}$etnjak \cite{Re}, Sabitov \cite{Sab} and Shefel' \cite{Sh} allowed to obtain the following stronger version of (MS1): \begin{itemize}[leftmargin=40pt]
\item[(MS1')] {\it Any distance preserving map between $\cC^{k,\a}$-Riemannian manifolds, with $k+\a>0$, is of class $\cC^{k+1,\a}$.}
\end{itemize} Now, claims (MS1') and (H5) imply a strengthened version of (MS2), which holds under much lower regularity assumptions. Namely \begin{itemize}[leftmargin=40pt]
\item[(MS2')] {\it Any closed group of isometries acting on a $\cC^{k,\a}$-Riemannian manifold, with $k+\a >0$, is a Lie transformation group.}
\end{itemize} To the best of our knowledge, this is the strongest version of the Myers-Steenrod Theorem which can be obtained using the so far known results. For the reader convenience, we provide a proof of it in Section \ref{dimMS2} below. Our Theorem \ref{MAIN1} is obtained under the same regularity assumptions of such stronger version of (MS2) and can therefore be considered as a perfect analogue of it in the category of the local groups of transformations.

We would like to point out that, as many authors have predicted the existence of a local version of (H5), also the contents of our Theorem \ref{MAIN1} were expected to be true (see \cite[p. 616]{Mos}). On the other hand, its proof remained an open problem since the very first appearance of the Myers-Steenrod Theorem in \cite{MS}, where the authors themselves ended the paper asking explicitly whether any locally compact group germ of local isometries were a Lie group germ or not. We guess that the lapse of time that passed between the statement of the problem and the finding of the solution presented in this paper was caused by the lack of specific technical tools for dealing with local Lie groups, a gap which was finally filled in the previous quoted paper by Goldbring. \smallskip

As a by-product, we also obtain a useful regularity result for locally homogeneous Riemannian metrics, on which the main results of \cite{Ped2,Ped3} are based. We recall that a Riemannian manifold $(M,g)$ is called {\it locally homogeneous} if the pseudogroup of local isometries of $(M,g)$ acts transitively on $M$. It is known that a smooth locally homogeneous Riemannian manifold is necessarily real analytic (see e.g. \cite[Lemma 2.1]{BLS}). From this and \cite{No}, it follows that for any $p \in M$ there exists a local Lie group of isometries which acts transitively on $(M,g,p)$. Here, we point out that transitivity for local group actions means that the orbit of the local group through the distinguished point $p$ contains a whole neighborhood of it.

By means of Theorem \ref{MAIN1}, the following kind of converse holds true.

\begin{thmint} Let $(M,g)$ be a locally homogeneous $\cC^1$-Riemannian manifold. If there exist a point $p\in M$ and a locally compact, effective local topological group of isometries which acts transitively on $(M,g,p)$, then $(M,g)$ is real analytic. \label{MAIN2} \end{thmint}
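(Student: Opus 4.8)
The plan is to use Theorem~\ref{MAIN1} to replace the given local topological group by a local \emph{Lie} group of isometries acting transitively on $(M,g,p)$, then to extract a real-analytic chart around $p$ from the resulting homogeneous-space picture — the crucial point being that a continuous invariant tensor on a Lie group is automatically real analytic — and finally to propagate analyticity to all of $M$ by local homogeneity. Let me write $\fG$ for the given locally compact, effective local topological group of isometries, and $\phi_a$ for the isometry of $(M,g)$ associated with $a\in\fG$.

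\emph{Step 1.} Apply Theorem~\ref{MAIN1} with $(k,\a)=(1,0)$: then $\fG$ is a local Lie group of isometries still acting transitively on $(M,g,p)$. By (MS1') each $\phi_a$ is a $\cC^{2}$-diffeomorphism, and — either through a Bochner--Montgomery type argument, or simply because $\fG$ embeds continuously, hence as a morphism of local Lie groups, into the local Lie transformation group of isometries produced by (the proof of) (MS2') — the local action $\fG\times M\to M$ can be taken of class $\cC^{2}$ for the given differentiable structure of $M$. \emph{Step 2.} Let $\fH\subseteq\fG$ be the isotropy of $p$; it is a closed local subgroup, hence a local Lie subgroup by the local closed-subgroup theorem, so $\fG/\fH$ carries a canonical real-analytic structure with $\fG\to\fG/\fH$ real analytic. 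Transitivity makes the orbit map $\b\colon a\fH\mapsto a\bcdot p$ defined near $e\fH$ and of class $\cC^{2}$; its differential at $e\fH$ is injective, since $d\b_{e\fH}([X])$ is the value at $p$ of the fundamental field of $X\in\gg$, and if this vanishes then (the action being $\cC^{2}$, so the field $\cC^{1}$ with unique integral curves) $\exp(tX)\in\fH$ for all small $t$, i.e.\ $X\in\gh$. As the orbit contains a neighbourhood of $p$, invariance of domain and the inverse function theorem give that $\b$ is a $\cC^{1}$-diffeomorphism onto a neighbourhood $U_{p}\ni p$. Transport the analytic structure of $\fG/\fH$ through $\b$ to a structure $\eA_{p}$ on $U_{p}$, which is $\cC^{1}$-compatible with that of $M$. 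Now pull $g$ back to $\fG$ along the composition of $\fG\to\fG/\fH$ with $\b$: from $\b(a\bcdot b\fH)=\phi_{a}(\b(b\fH))$ and $\phi_{a}^{*}g=g$ one reads off that the resulting continuous symmetric $2$-tensor on a neighbourhood of $e$ in $\fG$ is left-invariant, hence real analytic, being the image of its value at $e$ under the analytic map $(a,v)\mapsto(L_{a^{-1}})_{*}v$. Since it descends through the analytic submersion $\fG\to\fG/\fH$ to $\b^{*}(g|_{U_{p}})$, the metric $g|_{U_{p}}$ is real analytic for $\eA_{p}$.

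\emph{Step 3 and the main obstacle.} For an arbitrary $q\in M$, local homogeneity yields a local isometry $\psi$ with $\psi(p)=q$; transporting $(\eA_{p},g|_{U_{p}})$ through $\psi$ produces a real-analytic structure $\eA_{q}$ near $q$ in which $g$ is again real analytic. On overlaps the transition maps between the $\eA_{q}$'s are local isometries of a real-analytic Riemannian metric, hence real analytic (by (MS1') they are smooth, preserve the Levi-Civita connection, and so in analytic normal coordinates are affine), so the $\eA_{q}$'s glue into a real-analytic atlas of $M$ for which $g$ is real analytic, i.e.\ $(M,g)$ is real analytic. I expect the real difficulty to be concentrated in Step~2: establishing that the local Lie group action is differentiable with respect to the \emph{given} (merely $\cC^{1}$) structure on $M$, so that the orbit map is genuinely a local $\cC^{1}$-diffeomorphism onto a neighbourhood of $p$; once this regularity of the action is in hand, the left-invariance argument and the globalisation are routine.
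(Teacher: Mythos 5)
Your proposal is correct and follows essentially the same route as the paper: Theorem~\ref{MAIN1} together with the Calabi--Hartman regularity and a Bochner--Montgomery argument yields a $\cC^{2}$ local Lie group action, the local quotient by the isotropy carries the unique invariant (hence real-analytic) metric pulled back from $g$, and local homogeneity propagates the analytic charts, which glue because isometries between real-analytic metrics are real analytic (the paper cites DeTurck--Kazdan, Lemma~1.2, for this last step). The only cosmetic differences are your use of the set-theoretic stabilizer where the paper works with the abstract isotropy subgroup, and your direct justification of the gluing in place of that citation.
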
 \smallskip

The paper is structured as follows. In Sections \ref{prelRG} and \ref{prelGT} we collect some preliminaries on Riemannian manifolds of low regularity and groups of transformations, respectively. In Section \ref{dimMS2} we give the advertised proof of claim (MS2'), from which we derive a regularity result for homogeneous Riemannian metrics. In Sections \ref{thmA} and \ref{thmB} we prove Theorem \ref{MAIN1} and Theorem \ref{MAIN2}, respectively. \medskip

\noindent{\it Acknowledgement.} We warmly thank Christoph B\"ohm and Luigi Verdiani for helpful discussions about several aspects of this paper. We are grateful to Andrea Spiro for his interest and his important suggestions. We also thank Isaac Goldbring, Linus Kramer, Richard Palais and Fabio Podest{\aac} for useful comments. Finally, we would like to thank the anonymous referee for his/her careful reading of the manuscript. \medskip

\section{Riemannian manifolds of low regularity} \label{prelRG} \setcounter{equation} 0

\subsection{Notation} \hfill \par

%

We use the following standard notation (see \cite[p. 52]{GT}). Given a pair $(k,\a) \in \big(\bZ_{\geq0} \times (0,1]\big) \cup \{(\infty,0)\}$ and a ball $B \subset\subset \bR^m$, we denote by $\cC^{k,\a}(\bar{B})$ the subspace of functions in $\cC^{k}(\bar{B})$ whose $k$-th order partial derivatives are uniformly $\a$-H\"older continuous in $B$. By convention, we set $\cC^{k,0}(\bar{B})\=\cC^{k}(\bar{B})$ so that the notation $\cC^{k,\a}(\bar{B})$ is meaningful for all $\a \in [0,1]$. We recall that, for $(k,\a)\neq(\infty,0)$, the space $\cC^{k,\a}(\bar{B})$ turns into a Banach space when it is endowed with the norm $$|\!|f|\!|_{\cC^{k,\a}(\bar{B})} \= \begin{cases}
|\!|f|\!|_{\cC^k(\bar{B})} & \text{ if } \a=0 \\
|\!|f|\!|_{\cC^k(\bar{B})}+\max_{|j|=k}|\!|\p^{j}f|\!|_{\a,B} & \text{ if } \a\neq0 \end{cases} \,\, ,$$ where $$|\!|f|\!|_{\cC^k(\bar{B})} \= \sum_{s=0}^k\max_{|j|=s}\left\{\sup_{x\in B}\big|(\p^{j}f)(x)\big|\right\} \,\, , \quad |\!|u|\!|_{\a,B} \= \sup_{x,y \in B}\frac{\big|u(x)-u(y)\big|}{|x-y|^{\a}} \,\, .$$ Here, $j=(j_1,{\dots},j_m)$ is a multi-index, $|j|\=j_1+{\dots}+j_m$ and $\p^{j}f\=\frac{\p^{|j|}f}{\p^{j_1}{x^1}{\dots}\p^{j_m}{x^m}}$.

We say that a function $F=(F^1,{\dots},F^q): U \subset \bR^m \rar \bR^q$ is {\it of class $\cC^{k,\a}$} if $F^i|_{B} \in \cC^{k,\a}(\bar{B})$ for any $1\leq i \leq q$ and for any ball $B \subset\subset U$. In what follows, {\it smooth} will always mean $\cC^{\infty}$-smooth.


A path $\g: I \subset \bR \rar \bR^m$ is said to be {\it of class $\cA\cC$} if for any closed subinterval $[a,b] \subset I$, the restriction $\g|_{[a,b]}$ is absolutely continuous. We recall that if $\g:I \rar \bR^m$ is a path of class $\cA\cC$, then the tangent vector $\dot{\g}(t) \in T_{\g(t)}\bR^m=\bR^m$ exists for almost all $t \in I$.

\subsection{Riemannian metrics of low regularity} \hfill \par

Let $M$ be a topological manifold. From now on, every manifold is assumed to be connected. An atlas $\eA$ on $M$ is said to be a {\it $\cC^{k,\a}$-atlas} if its overlap maps are of class $\cC^{k,\a}$. A $\cC^{k_1,\a_1}$-atlas $\eA_1$ and a $\cC^{k_2,\a_2}$-atlas $\eA_2$ on $M$, with $k_1<k_2$ or $(k_1=k_2) \land (\a_1\leq\a_2)$, are said to be {\it compatible} if their union $\eA_1\cup\eA_2$ is a $\cC^{k_1,\a_1}$-atlas on $M$. The following classical result guarantees the existence of smooth structures under far weaker hypotheses.

\begin{theorem}[\cite{Hi}, Thm 2.9] Let $M$ be a topological manifold and $\eA$ be a $\cC^k$-atlas on $M$. If $k\geq1$, then there exists a smooth atlas $\eA_1$ on $M$ compatible with $\eA$. Moreover, if $\eA_2$ is another smooth atlas on $M$ compatible with $\eA$, then $(M,\eA_1)$ is smoothly diffeomorphic to $(M,\eA_2)$. \end{theorem}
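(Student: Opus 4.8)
\smallskip
\noindent\textit{Sketch of proof.} The plan is to reduce the statement to a smoothing problem for submanifolds of Euclidean space, where a genuine smooth structure is available from the outset. First I would apply the $\cC^k$ version of the Whitney embedding theorem (valid for $k\geq1$; here $M$ is, as usual, Hausdorff and second countable): there is a proper $\cC^k$-embedding $f\colon(M,\eA)\rar\bR^N$ onto a closed $\cC^k$-submanifold $S\=f(M)\subset\bR^N$. Since $f$ is a $\cC^k$-diffeomorphism onto $S$, pulling charts back through $f$ turns the problem for $(M,\eA)$ into the problem for $S$ with its graph atlas; thus it suffices to exhibit, on the point set $S$, a smooth atlas whose transition maps with the $\cC^k$-graph atlas of $S$ are of class $\cC^k$, and to prove such an atlas unique up to smooth diffeomorphism.

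For \textbf{existence}, I would perturb $S$ to a nearby smooth submanifold. Take a locally finite cover of $S$ by open sets in each of which $S$ is the graph of a $\cC^k$-map over an affine $m$-plane, mollify each defining map to a smooth one that is $\cC^k$-close to it on the relevant compacta, and glue the corresponding smooth graphs by a partition of unity \emph{on $\bR^N$} --- which is legitimate precisely because $\bR^N$ carries its standard smooth structure. For a fine enough choice this yields a properly embedded \emph{smooth} submanifold $S'\subset\bR^N$ that is $\cC^1$-close to $S$. The crucial observation is then the following: let $\rho$ be the nearest-point projection of a tubular neighbourhood of $S'$ onto $S'$, which is \emph{smooth} since $S'$ is a bona fide smooth submanifold. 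If $S'$ is close enough to $S$, then $S$ is contained in this tubular neighbourhood and $\rho|_S\colon S\rar S'$ is bijective with derivative everywhere close to the identity; being the restriction of a smooth map to the $\cC^k$-submanifold $S$, it is of class $\cC^k$, hence (by the inverse function theorem read in graph coordinates) a $\cC^k$-diffeomorphism. It is essential here to project onto the \emph{smooth} manifold $S'$ and not onto $S$, as the latter projection would be only $\cC^{k-1}$ and would cost one derivative. Transporting the smooth atlas of $S'$ to $S$ through $(\rho|_S)^{-1}$ produces an atlas with the same transition maps as that of $S'$, hence smooth, whose transition maps with the $\cC^k$-graph atlas of $S$ factor through $(\rho|_S)^{\pm1}$ and smooth maps, hence are $\cC^k$; pulling back to $M$ through $f$ gives the required $\eA_1$ compatible with $\eA$.

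For \textbf{uniqueness}, let $\eA_1,\eA_2$ be smooth atlases on $M$, each compatible with $\eA$. A transition map between an $\eA_1$-chart and an $\eA_2$-chart factors through an $\eA$-chart and is therefore $\cC^k$, so $\Id_M$ is a $\cC^k$-diffeomorphism from $(M,\eA_1)$ to $(M,\eA_2)$, two genuine smooth manifolds. It then remains to prove that $\cC^k$-diffeomorphic smooth manifolds are $\cC^\infty$-diffeomorphic when $k\geq1$: I would embed $(M,\eA_2)$ smoothly, properly and as a closed submanifold of some $\bR^L$, approximate the composite $(M,\eA_1)\rar(M,\eA_2)\hookrightarrow\bR^L$ in the strong $\cC^k$-topology by a smooth map (mollification in smooth charts of $(M,\eA_1)$, glued by a smooth partition of unity), and post-compose with the smooth tubular retraction onto $(M,\eA_2)$. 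The result is a smooth map $(M,\eA_1)\rar(M,\eA_2)$ that is $\cC^1$-close to $\Id_M$, hence a diffeomorphism because the diffeomorphisms form a $\cC^1$-open subset of the mapping space.

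The step I expect to be the main obstacle is the careful treatment of the borderline regularity $k=1$ together with the underlying technical machinery --- the strong (Whitney) $\cC^k$-topology on the relevant mapping spaces and the two soft inputs used repeatedly, namely density of smooth maps and openness of the sets of embeddings and diffeomorphisms --- and, in particular, the construction and transversality properties of tubular neighbourhoods for merely $\cC^1$ data. The rest is bookkeeping: composing atlases and checking that ``compatible'' is transitive enough for the unwanted factors to cancel. For the write-up one may simply defer all of this to \cite{Hi}.
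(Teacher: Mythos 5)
The paper offers no proof of this statement: it is imported verbatim from \cite{Hi} and used as a black box, so there is nothing internal to compare you against. Your sketch is a correct and classical architecture, but it is not the route Hirsch actually takes for existence. His proof is intrinsic: he fixes a countable, locally finite $\cC^k$-atlas and builds the smooth atlas by induction over the charts, at each stage correcting one $\cC^k$ chart, relative to the closed set where the structure has already been smoothed, by means of the relative approximation theorem and the openness of embeddings in the strong $\cC^1$ topology; no ambient embedding is needed. Your uniqueness argument, by contrast, is exactly his (Theorem 2.10 of \cite{Hi}): observe that $\Id_M$ is a $\cC^k$-diffeomorphism between the two smooth manifolds, approximate it by a smooth map, and invoke openness of diffeomorphisms. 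What your embedding route buys is a very concrete existence proof --- the smooth structure is the one induced by a nearby smooth submanifold $S'$ via the nearest-point projection, and your insistence on projecting onto the smooth model $S'$ rather than onto $S$, precisely so as not to lose a derivative at $k=1$, is the right key observation. What it costs is the extra input of the $\cC^1$ Whitney embedding theorem and of tubular neighbourhoods for $\cC^1$ data, which is why Hirsch defers this packaging to his Chapter~3.

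The one step that does not work as literally written is the construction of $S'$: you cannot ``glue the corresponding smooth graphs by a partition of unity,'' because the local pieces are graphs over \emph{different} affine $m$-planes and there is no meaningful convex combination of submanifolds; nor can you write them all as graphs over $S$ itself without a tubular neighbourhood of $S$, which is only $\cC^{k-1}$ and hence unavailable at $k=1$. The honest argument is again an induction over a locally finite family of graph charts: at each stage one modifies the submanifold only inside one chart, interpolating between the old and the mollified height function with a cut-off supported away from the portion already smoothed, and one checks that the result remains an embedded submanifold, agrees with the previous one outside the chart, and is $\cC^1$-close to it. This is repairable and is exactly the bookkeeping you wave at, but as stated the gluing is the gap; everything downstream of it (smoothness of $\rho$, the fact that $\rho|_S$ is a $\cC^k$-diffeomorphism onto $S'$ by the inverse function theorem, and the compatibility of the transported atlas with $\eA$) is sound.
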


This theorem allows us to restrict our discussion, from now on, to the realm of smooth manifolds. On this regard, we recall the following standard definitions: \begin{itemize}
\item[$\bcdot$] a function $f:M_1 \rar M_2$ between smooth manifolds is said to be {\it of class $\cC^{k,\a}$} if its expressions in local coordinates are of class $\cC^{k,\a}$;
\item[$\bcdot$] a tensor field $T$ is said to be {\it of class $\cC^{k,\a}$} if its components in local coordinates are of class $\cC^{k,\a}$;
\item[$\bcdot$] a path $\g: I \subset \bR \rar M$ on a smooth manifold is said to be {\it of class $\cA\cC$} if its expressions in local coordinates are of class $\cA\cC$.
\end{itemize} A {\it $\cC^{k,\a}$-Riemannian manifold $(M,g)$} is the datum of a smooth manifold $M$ together with a Riemannian metric $g$ on $M$ of class $\cC^{k,\a}$, that is $g(\tfrac{\p}{\p x^i},\tfrac{\p}{\p x^j})$ are of class $\cC^{k,\a}$ for any choice of coordinate vector fields $\tfrac{\p}{\p x^1},{\dots},\tfrac{\p}{\p x^m}$. We define \beq \begin{gathered} \cI\=\big\{\g:[a_{\g},b_{\g}] \rar M \text{ path of class $\cA\cC$ } \big\} \,\, , \\
\ell_g: \cI \rar \bR \,\, , \quad \ell_g(\g) \= \int_{a_{\g}}^{b_{\g}}\big|\dot{\g}(t)\big|_g\,dt \,\, , \\
\td_g: M\times M \rar \bR \,\, , \quad \td_g(x,y) \= \inf\big\{\ell_g(\g): \g \in \cI , \, \g(a_{\g})=x , \, \g(b_{\g})=y \big\} \,\, .
\end{gathered} \label{length} \eeq

\begin{proposition}[\cite{Bu}] Let $(M,g)$ be a $\cC^0$-Riemannian manifold and $(\cI, \ell_g, \td_g)$ as in \eqref{length}. \begin{itemize}
\item[i)] The map $\ell_g$ is additive with respect to concatenation, continuous on segments and invariant under reparametrizations.
\item[ii)] The map $\td_g$ is a distance function and it determines the same topology of $M$.
\item[iii)] Given a path $\g \in \cI$, the following equalities hold: $$\begin{gathered}
|\dot{\g}(t)|_g=\lim_{\d\rar0}\frac{\td_g(\g(t+\d),\g(t))}{\d} \,\,\, \text{ for any $t \in (a_{\g},b_{\g})$ for which $\dot{\g}(t)$ exists } \,\, , \\
\ell_g(\g){=}\sup\bigg\{\sum_{i=1}^N\td_g(\g(t_{i-1}),\g(t_i)) : N \in \bN \, , \,\, a_{\g}{=}t_0 {<} t_1 {<} {\dots} {<} t_N {=}b_{\g}\bigg\} \,\, . \end{gathered}$$
\end{itemize} \end{proposition}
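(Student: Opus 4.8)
The plan is to reduce all three statements to computations in local coordinates, where continuity of $g$ and compactness give, on each compact coordinate set, constants $0<c\le C$ with $c\,|v|_{\mathrm{eucl}}\le|v|_{g_x}\le C\,|v|_{\mathrm{eucl}}$, and then to invoke standard real analysis (absolute continuity of the indefinite integral, the change-of-variables formula for monotone $\cA\cC$ maps, Lebesgue's theorem that monotone functions are a.e.\ differentiable) together with the elementary metric notion of length. For (i): covering $[a_\g,b_\g]$ by finitely many compact subintervals with images in single charts, $t\mapsto|\dot\g(t)|_g$ is dominated by $C|\dot\g(t)|_{\mathrm{eucl}}\in L^1$, so $|\dot\g(\cdot)|_g\in L^1$ and $\ell_g(\g)<\infty$; additivity under concatenation (having checked that a concatenation of $\cA\cC$ paths is $\cA\cC$) is additivity of the integral over adjacent intervals; continuity on segments is the absolute continuity of $t\mapsto\int_{a_\g}^{t}|\dot\g|_g$; invariance under a monotone $\cA\cC$ reparametrization is the change-of-variables identity combined with the a.e.\ chain rule.

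For (ii): finiteness of $\td_g$ comes from connectedness of $M$ (join points by a piecewise smooth, hence $\cA\cC$, path), symmetry from the reparametrization $t\mapsto a_\g+b_\g-t$, the triangle inequality from concatenation and additivity. The one substantial point is positivity: if $x\neq y$, take a chart $\f$ at $x$ with $\f(x)=0$ and a Euclidean ball $\ol{B_r(0)}\subset\f(U)$ not containing $\f(y)$; by $|v|_g\ge c|v|_{\mathrm{eucl}}$ on $\ol{B_r(0)}$, any $\cA\cC$ path from $x$ to $y$ must reach $\f^{-1}(\p B_r(0))$ and hence has $g$-length $\ge c\,r$, so $\td_g(x,y)\ge c\,r>0$. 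For the topology, one checks that on a small coordinate ball $\td_g$ is bi-Lipschitz equivalent to the Euclidean distance: $\td_g(x,y)\le C|\f(x)-\f(y)|$ via the straight segment, and $\td_g(x,y)\ge c|\f(x)-\f(y)|$ by the dichotomy ``a path either stays in a fixed ball, where its length bounds $c$ times the chord, or leaves it, where its length exceeds a fixed amount bigger than $c$ times the chord''; a bi-Lipschitz homeomorphism preserves topology.

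For (iii): fix $t$ with $\dot\g(t)$ existing and work in a chart at $\g(t)$. Comparing with the straight segment $\s_\d$ from $\g(t)$ to $\g(t+\d)$ and using $\g(t+\d)-\g(t)=\d\,\dot\g(t)+o(\d)$ with continuity of $g$ gives $\ell_g(\s_\d)=\d\,|\dot\g(t)|_g+o(\d)$, so $\limsup_{\d\to0^+}\td_g(\g(t+\d),\g(t))/\d\le|\dot\g(t)|_g$; conversely, given $\e>0$ pick $\rho>0$ with $g_x\ge(1-\e)g_{\g(t)}$ (as quadratic forms) on the Euclidean $\rho$-ball at $\g(t)$, so a path from $\g(t)$ to $\g(t+\d)$ that stays there has $g$-length $\ge\sqrt{1-\e}\,(\d|\dot\g(t)|_g+o(\d))$, while one that leaves it has $g$-length $\ge\sqrt{1-\e}\,\rho>(|\dot\g(t)|_g+1)\d$ for $\d$ small, giving $\liminf_{\d\to0^+}\td_g(\g(t+\d),\g(t))/\d\ge\sqrt{1-\e}\,|\dot\g(t)|_g$ and, as $\e\to0$, the first identity (the two-sided version with $|\d|$, needed below, follows verbatim using symmetry of $\td_g$). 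For the second identity let $V(t)$ be the supremum over partitions of $[a_\g,t]$ of $\sum\td_g(\g(t_{i-1}),\g(t_i))$; by (i) every such sum over $[a_\g,b_\g]$ is $\le\ell_g(\g)$, so the right-hand supremum equals $V(b_\g)\le\ell_g(\g)<\infty$, hence $V$ is bounded non-decreasing, a.e.\ differentiable with $\int_{a_\g}^{b_\g}V'\le V(b_\g)$; since the two-point partition gives $V(t+\d)-V(t)\ge\td_g(\g(t),\g(t+\d))$, the first identity yields $V'(t)\ge|\dot\g(t)|_g$ a.e., so $\ell_g(\g)=\int_{a_\g}^{b_\g}|\dot\g|_g\le\int_{a_\g}^{b_\g}V'\le V(b_\g)$, and with the reverse inequality this is an equality. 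The main obstacle is the lower bound in the first identity (equivalently, positivity of $\td_g$): since $g$ is only $\cC^0$ there is no smooth exponential map, so one has to rule out that paths gain by wandering far from $\g(t)$ before returning — precisely where the ``escaping a fixed ball is expensive'' argument is needed; the rest is routine bookkeeping with absolutely continuous and monotone functions.
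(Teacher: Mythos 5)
Your argument is correct, and it is essentially the standard proof of this result: the paper itself gives no proof but cites Burtscher \cite{Bu}, whose argument likewise rests on local two-sided comparison of $g$ with a constant (Euclidean) metric on compact coordinate sets, the ``leaving a fixed ball costs a fixed amount of length'' dichotomy for positivity and for the metric-derivative lower bound, and standard facts about absolutely continuous and monotone functions (a.e.\ differentiability, $\int V'\le V(b)-V(a)$) for the variational formula. The only cosmetic slip is the bound ``$\ge\sqrt{1-\e}\,\rho$'' for paths escaping the $\rho$-ball, which should carry the constant comparing $g_{\g(t)}$ with the Euclidean norm (or one normalizes the chart so that $g_{\g(t)}$ is Euclidean at the center); this does not affect the argument.
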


This last result shows that the triple $(\cI,\ell_g,\td_g)$ defined in \eqref{length} turns a $\cC^{k,\a}$-Riemannian manifold $(M,g)$ into a separable, locally compact length space. From now on, we will use the notation $\eB_g(x,r)$ to denote the metric ball centered at $x \in M$ of radius $r>0$ inside $(M,\td_g)$. \smallskip

Given two $\cC^{k,\a}$-Riemannian manifolds $(M_1,g_1)$ and $(M_2,g_2)$, a function $f: M_1 \rar M_2$ is said to be a {\it metric isometry} if it is surjective and distance preserving, i.e. $d_{g_1}(x,y)=d_{g_2}(f(x),f(y))$ for any $x,y \in M_1$. It is straightforward to observe that any metric isometry is a $\cC^{0,1}$-homeomorphism and that the inverse of a metric isometry is itself a metric isometry. On the other hand, a map $f: M_1 \rar M_2$ is called a {\it Riemannian isometry} if it is a $\cC^{k+1,\a}$-diffeomorphisms between $M_1$ and $M_2$ such that $f^*g_2=g_1$. Notice that any Riemannian isometry is, in particular, a metric isometry. Remarkably, the following weaker converse assertion holds.

\begin{theorem}[\cite{CH, Re, Sab, Sh},\cite{Tay}] Let $f:(M_1,g_1) \rar (M_2,g_2)$ be a metric isometry between two $\cC^{k,\a}$-Riemannian manifolds. If $k+\a>0$, then $f$ is of class $\cC^{k+1,\a}$ and it is a Riemannian isometry. \label{Isoreg} \end{theorem}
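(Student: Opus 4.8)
The plan is to reduce the statement to a local one — regularity is local, and once $f$ is known to be a diffeomorphism the identity $f^*g_2=g_1$ can be checked in charts — and then to run the now-standard argument through \emph{harmonic coordinates}, which is the one producing the sharp gain from $\cC^{k,\a}$ to $\cC^{k+1,\a}$. First I would dispose of the soft points. A metric isometry is a homeomorphism (it is surjective by hypothesis, and both it and its inverse are distance preserving, hence injective and bicontinuous, so in particular $\dim M_1=\dim M_2$), and in any chart a continuous positive-definite metric is uniformly comparable on compact sets to the Euclidean metric; hence, by the Proposition of \cite{Bu}, items (ii)--(iii), the distance $\td_{g_i}$ is locally bi-Lipschitz equivalent to the Euclidean one and $f$ is locally bi-Lipschitz. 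By Rademacher's theorem $f$ is then differentiable almost everywhere, and wherever $df_x$ exists it is a \emph{linear isometry} $(T_xM_1,g_1)\to(T_{f(x)}M_2,g_2)$: for a smooth curve $\g$ with $\g(0)=x$, item (iii) of that Proposition gives $|\dot\g(0)|_{g_1}=\lim_{t\to0^+}\tfrac1t\,\td_{g_1}(\g(t),x)=\lim_{t\to0^+}\tfrac1t\,\td_{g_2}(f(\g(t)),f(x))=|df_x\dot\g(0)|_{g_2}$, and a linear isometry between spaces of equal dimension is invertible. In particular $f$ preserves the Riemannian volume, since the $m$-dimensional Hausdorff measures of $(M_i,\td_{g_i})$ coincide with the Riemannian volumes and are interchanged by the distance-preserving $f$.

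Next I would establish the analytic heart of the proof: that $f$ conjugates the Laplace--Beltrami operators. Fix $q\in M_1$ and pick harmonic coordinates $(y^1,\dots,y^m)$ for $g_2$ near $f(q)$; these exist even for a merely $\cC^{0,\a}$ metric, by solving the Dirichlet problem $\Delta_{g_2}y^a=0$ with linear boundary data on a small Euclidean ball and invoking $\cC^{1,\a}$ estimates for divergence-form operators to see that the resulting map is a chart once the ball is small enough (DeTurck and Kazdan). Set $f^a\=y^a\circ f$, a Lipschitz — hence $H^1_{\mathrm{loc}}$ — function near $q$. Because $df$ is a linear isometry a.e.\ and $f$ preserves volume, the chain rule for Lipschitz compositions gives $|\n_{g_1}(u\circ f)|_{g_1}=|\n_{g_2}u|_{g_2}\circ f$ almost everywhere, whence $\int_\Omega|\n_{g_1}(u\circ f)|_{g_1}^2=\int_{f(\Omega)}|\n_{g_2}u|_{g_2}^2$ for every domain $\Omega\subset M_1$ and every $u\in H^1(f(\Omega))$; consequently $u\circ f$ is a weak solution of $\Delta_{g_1}(u\circ f)=0$ whenever $\Delta_{g_2}u=0$. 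Applying this to $u=y^a$, each $f^a$ is a weak solution of the divergence-form equation $\p_i\!\big(\sqrt{|g_1|}\,g_1^{ij}\,\p_jf^a\big)=0$, whose coefficients $\sqrt{|g_1|}\,g_1^{ij}$ are of class $\cC^{k,\a}$ (inversion and $\sqrt{\det}$ are smooth on positive-definite matrices and preserve H\"older classes).

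Finally I would invoke elliptic regularity. Schauder theory for divergence-form operators with $\cC^{0,\a}$ coefficients gives $f^a\in\cC^{1,\a}$, and when $k\ge1$ one rewrites the equation in nondivergence form, $g_1^{ij}\p_i\p_jf^a=-\tfrac1{\sqrt{|g_1|}}\,\p_i\!\big(\sqrt{|g_1|}\,g_1^{ij}\big)\p_jf^a$, and bootstraps — the right-hand side gains one derivative at each stage, capped by the $\cC^{k-1,\a}$ regularity of the coefficients — to reach $f^a\in\cC^{k+1,\a}$. Hence the components of $f$ in the harmonic chart on $M_2$ are of class $\cC^{k+1,\a}$; as $\cC^{k+1,\a}\subset\cC^1$ and $df$ is a linear isometry a.e., continuity upgrades this to \emph{every} point, so $f$ is a $\cC^{k+1,\a}$ local diffeomorphism and $f^*g_2=g_1$ (an identity of continuous tensor fields holding a.e., hence everywhere). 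Being also a homeomorphism, $f$ is a global $\cC^{k+1,\a}$-diffeomorphism with $f^*g_2=g_1$, i.e.\ a Riemannian isometry. The main obstacle — and the reason the result is a synthesis of the cited works \cite{CH,Re,Sab,Sh,Tay} rather than an elementary argument — is the borderline elliptic input at the lowest regularity $k+\a>0$: existence of harmonic coordinates, and the \emph{sharp} Schauder estimate producing $\cC^{k+1,\a}$ solutions from $\cC^{k,\a}$ coefficients, the base case $k=0$ included. The geometric step, that $f$ conjugates the Laplacians, is by contrast the conceptual key but needs only the almost-everywhere differentiability and the volume preservation recorded above.
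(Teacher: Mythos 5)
The paper does not prove this statement: it is quoted as a known result, with the proof delegated to the cited references \cite{CH,Re,Sab,Sh} and, for the sharp low-regularity case $k+\alpha>0$, to Taylor \cite{Tay}. Your harmonic-coordinate argument is essentially a faithful reconstruction of the proof in \cite{Tay} (and of the Reshetnyak--Sabitov--Shefel' line of reasoning): bi-Lipschitz regularity and a.e.\ isometric differential from the metric derivative formula, preservation of the Dirichlet energy and hence of weak harmonicity via the Dirichlet principle, existence of harmonic coordinates for $\cC^{0,\alpha}$ metrics from divergence-form Schauder theory, and the nondivergence-form bootstrap for $k\ge 1$. At the level of a sketch this is correct and is the intended route; the only steps that genuinely carry the weight, as you yourself flag, are the borderline elliptic inputs (existence of $\cC^{1,\alpha}$ harmonic charts for $\cC^{0,\alpha}$ metrics and the sharp $\cC^{k,\alpha}\Rightarrow\cC^{k+1,\alpha}$ Schauder gain), which is precisely the content of the cited literature rather than something to be reproved here.
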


From now on, we will use the term {\it isometry} just to indicate a metric isometry. By means of Theorem \ref{Isoreg}, this coincides with the notion of Riemannian isometry only one exception, namely the pathological case $k+\a=0$. The full isometry group of a $\cC^{k,\a}$-Riemannian manifold $(M,g)$ will be denoted by $\Iso(M,g)$. \medskip

\section{Groups of transformations} \label{prelGT} \setcounter{equation} 0

\subsection{Global groups of transformations} \hfill \par

We recall that a {\it topological group} is a Hausdorff topological space equipped with a continuous group structure. A topological group is a {\it (real analytic) Lie group} if it is endowed with a smooth (resp. real analytic) manifold structure with respect to which the group operations are smooth (resp. real analytic). It is well known that the category of real analytic Lie groups is equivalent to the category of smooth Lie groups via the forgetful functor (see e.g. \cite[p. 43]{KN1}). The following characterization of Lie groups is the above mentioned solution to the Hilbert fifth problem.

\begin{theorem}[\cite{Gle, MZ1}] For any connected and locally compact topological group $\fG$, the following conditions are equivalent. \begin{itemize}
\item[a)] $\fG$ is locally Euclidean, i.e. there is a neighborhood of $e\!\in\!\fG$ homeomorphic to an open ball of some $\bR^N\!$.
\item[b)] $\fG$ has no small subgroups (NSS for short), i.e. there exists a neighborhood of $e \in \fG$ containing no nontrivial subgroups of $\fG$.
\item[c)] $\fG$ admits a unique smooth manifold structure which makes it a Lie group.
\end{itemize} \label{H5} \end{theorem}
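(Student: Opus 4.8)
\emph{Proof strategy (sketch).} The statement is the Gleason--Montgomery--Zippin solution of Hilbert's fifth problem, and I would organise a proof of it around the cycle of implications $(c)\Rightarrow(a)\Rightarrow(b)\Rightarrow(c)$, treating the genuinely hard analytic core as an external input. The implication $(c)\Rightarrow(a)$ is immediate, since a Lie group is by definition locally Euclidean. For $(c)\Rightarrow(b)$ one uses the exponential map: if $\fG$ carries a compatible Lie group structure with Lie algebra $\gg$, choose a star-shaped symmetric neighbourhood $V$ of $0\in\gg$ on which $\exp$ is injective (say on $3V$) and a diffeomorphism onto its image $U=\exp(V)$; if $H\subseteq U$ were a subgroup with a nontrivial element $h=\exp X$, $X\in V\setminus\{0\}$, picking the least $n$ with $nX\notin V$ gives $nX\in 2V$, while $h^{n}\in H\subseteq U$ forces $h^{n}=\exp Y$ with $Y\in V$, and injectivity of $\exp$ on $3V$ yields $nX=Y\in V$, a contradiction. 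Hence $U$ contains no nontrivial subgroup and $\fG$ is NSS.

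The substantial content lies in the reverse direction, and the engine I would invoke is the Gleason--Yamabe structure theorem: every locally compact group $\fG$ has an open subgroup $\fG'$ with the property that for every neighbourhood $W$ of the identity there is a compact normal subgroup $N\subseteq W$ of $\fG'$ with $\fG'/N$ a Lie group. Granting this, $(b)\Rightarrow(c)$ is short: by NSS choose $W$ containing no nontrivial subgroup, so the corresponding $N$ is trivial and $\fG'$ is itself a Lie group; being open in $\fG$, it furnishes an analytic chart at $e$ which can be translated throughout $\fG$, making $\fG$ a Lie group. Uniqueness of the smooth structure follows from the classical automatic-regularity fact that any continuous homomorphism between Lie groups is real analytic: applying it to the identity map of $\fG$ equipped with two compatible Lie group structures shows the two structures coincide.

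The remaining implication $(a)\Rightarrow(b)$ is the Montgomery--Zippin dimension-theoretic argument. Assume $\fG$ is locally Euclidean of some dimension $n$, pass to the open subgroup $\fG'$ (also locally Euclidean of dimension $n$), and take a compact normal $N\subseteq W$ with $\fG'/N$ a Lie group, $W$ arbitrarily small. Using the behaviour of covering dimension along the quotient map $\fG'\to\fG'/N$ with compact fibre $N$, together with invariance of domain, one forces $N$ to be zero-dimensional, hence finite; and a connectedness argument — a finite normal subgroup of a connected locally Euclidean group that can be squeezed into arbitrarily small neighbourhoods of $e$ must be trivial — then delivers the NSS property. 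The main obstacle, by a wide margin, is the Gleason--Yamabe theorem itself: its proof requires Gleason's lemma extracting enough one-parameter subgroups from the no-small-subgroups behaviour to coordinatise a neighbourhood of $e$, the Peter--Weyl theorem to produce the compact normal subgroups $N$ in the compact case, the construction of a Lie bracket on these coordinates together with verification of the local Lie group axioms, and finally control of how the Lie quotients $\fG'/N$ organise into an inverse system as $W$ shrinks. I would treat this package as a black box and cite \cite{Gle, MZ1}.
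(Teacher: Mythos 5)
The paper offers no proof of this statement at all: it is quoted as the classical Gleason--Montgomery--Zippin theorem and attributed to \cite{Gle, MZ1}, which is precisely what you do for the analytic core, so there is no internal argument to compare yours against. Your organisation --- $(c)\Rightarrow(a)$ trivially, $(c)\Rightarrow(b)$ via the exponential chart, $(b)\Rightarrow(c)$ via the Gleason--Yamabe structure theorem, $(a)\Rightarrow(b)$ via dimension theory --- is the standard modern route and is acceptable as a sketch. A few points should be tightened. In $(b)\Rightarrow(c)$ the chart-translation step is unnecessary: since $\fG$ is assumed connected, the open subgroup $\fG'$ produced by Gleason--Yamabe equals $\fG$ outright. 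In $(c)\Rightarrow(b)$, ``star-shaped'' does not by itself give $(n-1)X\in V,\ X\in V\Rightarrow nX\in 2V$ in the form you need; take $V$ to be a convex ball. The only substantive soft spot is in $(a)\Rightarrow(b)$: the inference ``$N$ is zero-dimensional, hence finite'' is false for compact groups in general, since zero-dimensional compact groups are profinite (e.g.\ the $p$-adic integers $\bZ_p$) and need not be finite. Excluding an infinite profinite normal subgroup of a locally Euclidean group requires a further input --- classically Newman's theorem, that a nontrivial compact group cannot act effectively on a connected manifold with all orbits confined to a sufficiently small neighbourhood of a point, applied to $N$ acting by translations --- and the subsequent ``squeeze a finite normal subgroup to the identity'' step also uses this. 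Since you explicitly declare the whole of $(a)\Rightarrow(b)$ and the Gleason--Yamabe theorem to be black boxes cited to \cite{Gle, MZ1}, this is a lapse of exposition rather than a fatal gap, but as written that sentence is not an argument.
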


Let $M$ be a smooth manifold. Given $k \in \bZ \cup \{\infty\}$, $k\geq0$, a {\it topological group of $\cC^k$-transformations $G =(\fG,\Q)$ on $M$} is the datum of a topological group $\fG$ together with a continuous action $\Q: \fG\times M \rar M$ on $M$ such that the map $\Q(a)\= \Q(a,\cdot) : M \rar M$ is of class $\cC^k$ for any $a \in \fG$. We recall that the correspondence $a \mapsto \Q(a)$ determines a group homeomorphism $\fG \rar \Diff^k(M)$ from $\fG$ to the group of $\cC^k$-diffeomorphisms of $M$ and that $G=(\fG,\Q)$ is called {\it effective} (resp. {\it almost-effective}) if the kernel of $\fG \rar \Diff^k(M)$ is trivial (resp. discrete). Furthermore, we say that $G=(\fG,\Q)$ is {\it closed} if it is effective and $\Q(\fG)$ is closed in $\Diff^k(M)$. \smallskip

A topological group of $\cC^k$-transformations $G=(\fG,\Q)$ on $M$ is called {\it Lie group of $\cC^k$-transformations} if $\fG$ is a Lie group and the map $\Q:\fG\times M \rar M$ is of class $\cC^k$.

\begin{rem} By \cite[Thm 4]{BM}, the second condition above is redundant. Namely, if $\fG$ is a Lie group and each map $\Q(a):M\rar M$ is of class $\cC^k$, then the map $\Q:\fG\times M \rar M$ is automatically of class $\cC^k$. \label{BMT4} \end{rem}

If $M$ is equipped with a $\cC^{k,\a}$-Riemannian metric $g$, then $G=(\fG,\Q)$ is called {\it topological (resp. Lie) group of isometries} if each map $\Q(a): M \rar M$ is an isometry of $(M,g)$. By the classical Myers-Steenrod Theorem, it is known that {\it any closed topological group of isometries of a $\cC^k$-Riemannian manifold, with $k\geq2$, is a Lie group of isometries.}

\subsection{Local groups of transformations} \hfill \par

In this subsection, we collect some basic facts on local groups of transformations. For more details, we refer to \cite[Ch 1]{Ol}, \cite{Pa}, \cite{PoSp}.

\subsubsection{Local topological groups and local Lie groups} \hfill \par

A {\it local topological group} is a tuple $(\fG, e, \eJ(\fG), \eD(\fG), \jmath, \nu)$ formed by: \begin{itemize}
\item[i)] a Hausdorff topological space $\fG$ with a distinguished element $e \in \fG$ called {\it unit},
\item[ii)] a neighborhood $\eJ(\fG) \subset \fG$ of $e$ and an open subset $\eD(\fG) \subset \fG\times\fG$ which contains both $\fG\times\{e\},\{e\}\times \fG$,
\item[iii)] two continuous maps $\jmath: \eJ(\fG) \rar \fG$, $\nu: \eD(\fG) \rar \fG$,
\end{itemize} such that, for any choice of $a,a_1,a_2 \in \fG$ and $b \in \eJ(\fG)$: \begin{itemize}
\item[$\bcdot$] $\nu(a,e)=\nu(e,a)=a$,
\item[$\bcdot$] if $(a_1,a), (a,a_2), (a_1,\nu(a,a_2)), (\nu(a_1,a),a_2) \in \eD(\fG)$, then it holds that $\nu(a_1,\nu(a,a_2))= \nu(\nu(a_1,a),a_2)$,
\item[$\bcdot$] $(b,\jmath(b)),(\jmath(b),b) \in \eD(\fG)$ and $\nu(b,\jmath(b))=\nu(\jmath(b),b)=e$. \end{itemize} From now on we adopt the usual notation $a_1\cdot a_2 \= \nu(a_1,a_2)$, $a^{-1}\=\jmath(a)$ and we will indicate any local topological group $(\fG, e, \eJ(\fG), \eD(\fG), \jmath, \nu)$ simply by $\fG$.

Given a local topological group $\fG$, every neighborhood $\eU$ of the unit $e \in \fG$ inherits a structure of local topological group induced by $\fG$. In fact, if we set $$\eJ(\eU) \= \eJ(\fG) \cap \eU \cap \jmath^{-1}(\eU) \,\, , \quad \eD(\eU)\=\eD(\fG) \cap (\eU \times \eU) \cap \nu^{-1}(\eU) \,\, ,$$ then one can directly check that $(\eU, e, \eJ(\eU), \eD(\eU), \jmath|_{\eJ(\eU)}, \nu|_{\eD(\eU)})$ is itself a local topological group. In this case, we say that {\it $\eU$ is a restriction of $\fG$}. We remark that $\fG$ can be restricted to a neighborhood $\eU$ of the unit which is {\it symmetric}, i.e. $\eU=\eJ(\eU)$, and {\it cancellative}, i.e. for any $a,a_1,a_2 \in \eU$ it holds: \begin{itemize}
\item[$\bcdot$] if $(a,a_1), (a,a_2) \in \eD(\eU)$ and $a\cdot a_1=a\cdot a_2$, then $a_1=a_2$;
\item[$\bcdot$] if $(a_1,a), (a_2,a) \in \eD(\eU)$ and $a_1\cdot a=a_2\cdot a$, then $a_1=a_2$;
\item[$\bcdot$] if $(a_1,a_2) \in \eD(\eU)$, then $(a_2^{-1},a_1^{-1})\in \eD(\eU)$ and $(a_1\cdot a_2)^{-1}=a_2^{-1}\cdot a_1^{-1}$
\end{itemize} (see e.g. \cite[Sec 1.5.6]{Tao}, \cite[Cor 2.17]{Go}). In particular, this implies that $(\jmath|_{\eU} \circ \jmath|_{\eU}) = \Id_{\eU}$. From now on, any local topological group $\fG$ and any neighborhood $\eU \subset \fG$ of the unit are assumed to be symmetric and cancellative. \smallskip

A subset $\fH \subset \fG$ which contains the unit $e \in \fG$ is said to be a {\it sub-local group}, if there exists a neighborhood $\eV$ of $\fH$ such that for any $a \in \fH$, $(a_1,a_2) \in (\fH \times \fH) \cap \eD(\fG)$ it holds $$ a^{-1} \in \eV \,\,\Longrightarrow\,\, a^{-1} \in \fH \,\, , \quad\quad a_1\cdot a_2 \in \eV \,\,\Longrightarrow\,\, a_1\cdot a_2 \in \fH \,\, .$$ Any such an open subset $\eV \subset \fG$ is called {\it associated neighborhood for $\fH$}.

A sub-local group $\fH$ such that $\fH \times \fH \subset \eD(\fG)$ and with $\eV=\fG$ is called a {\it subgroup}. Notice that, by such hypothesis, $a\cdot b \in \fH$, $a^{-1} \in \fH$ for any $a,b \in \fH$ and therefore $\fH$ is a topological group in the usual sense. The local topological group $\fG$ is said {\it to have no small subgroups} (NSS) if there exists a neighborhood of the unit with no nontrivial subgroups. \smallskip

Given two local topological groups $\fG$ and $\fG'$, a {\it local homomorphism from $\fG$ to $\fG'$} is a pair $(\eU,\f)$ given by a neighborhood $\eU \subset \fG$ of the unit and a continuous function $\f: \eU \rar \fG'$ such that \begin{itemize}
\item[$\bcdot$] $\f(e)=e'$ and $\f(\eD(\eU))\subset\eD(\fG')$,
\item[$\bcdot$] $\f(a^{-1})=\f(a)^{-1}$ and $\f(a_1\cdot a_2)=\f(a_1)\cdot \f(a_2)$ for any $a \in \eU$, $(a_1,a_2) \in \eD(\eU)$.
\end{itemize} Two local homomorphisms $(\eU_1,\f_1)$, $(\eU_2,\f_2)$ are {\it equivalent} if there exists a neighborhood $\tilde{\eU} \subset \eU_1 \cap \eU_2$ of the unit such that $\f_1|_{\tilde{\eU}}=\f_2|_{\tilde{\eU}}$. For the sake of shortness, we will simply write $\f: \fG \rar \fG'$ to denote a local homomorphism $(\eU,\f)$, determined up to an equivalence. The composition $\f' \circ \f$ of two local homomorphisms is defined in an obvious way and a local homomorphism $\f: \fG \rar \fG'$ is called a {\it local isomorphism} if there exists a local homomorphism $\psi:\fG' \rar \fG$ such that $\psi \circ \f=\Id_{\fG}$ and $\f \circ \psi=\Id_{\fG'}$, where of course the equalities are up to equivalence. \smallskip

A {\it local Lie group} is a local topological group that is also a smooth manifold in such a way that the local group operations $\jmath:\fG\rar \fG$ and $\nu:\eD(\fG) \rar \fG$ are smooth. Just like in the global Lie groups theory, one can associate a Lie algebra $\gg$ of left invariant vector fields to any local Lie group $\fG$. Analogues of Lie's three fundamental theorems hold also for the local Lie groups (\cite[Ch 3]{Bou}). In particular, it turns out that every local Lie group is locally isomorphic to some Lie group by means of a smooth local isomorphism. We resume in the following theorem the solution of the Hilbert fifth problem for local topological groups provided by Goldbring. We refer to its work \cite{Go} for the proof and more details.

\begin{theorem}[\cite{Go}] For any locally compact local topological group $\fG$, the conditions listed below are equivalent. \begin{itemize}
\item[a)] $\fG$ is locally Euclidean.
\item[b)] $\fG$ is NSS.
\item[c)] $\fG$ is locally isomorphic to a Lie group.
\end{itemize} \label{H5local} \end{theorem}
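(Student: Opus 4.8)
The statement to be proved is Theorem \ref{H5local}, Goldbring's local version of the Hilbert fifth problem. Since the excerpt explicitly says ``We refer to [Go] for the proof,'' what I would actually write here is a proof sketch of the easier implications together with a clear indication of which part is Goldbring's genuinely hard contribution. The equivalence (a)$\Leftrightarrow$(b) is the elementary end of the circle: the plan is to show that a locally Euclidean local topological group is NSS, and conversely. For (a)$\Rightarrow$(b), the key point is that in a locally Euclidean space small subgroups cannot exist for purely topological-dynamical reasons — one runs the classical ``curve of small generators'' argument: if every neighborhood of $e$ contained a nontrivial subgroup $H$, pick elements $h_n\to e$ with $h_n\neq e$, look at the first power $h_n^{k_n}$ that escapes a fixed small ball, and extract (using local compactness and local Euclideanness) a nontrivial one-parameter-like limiting object, contradicting the fact that in $\bR^N$ the only subgroup accumulating at $0$ this way is trivial or discrete — the contradiction being that $H$ itself is a subgroup, not just a local one. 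The implication (c)$\Rightarrow$(a) is immediate: a local Lie group, being a local isomorph of a Lie group, inherits a smooth chart near $e$, hence is locally Euclidean; and (c)$\Rightarrow$(b) follows since Lie groups are NSS by Theorem \ref{H5}.

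\textbf{The main step.} The substantive direction is (b)$\Rightarrow$(c): a locally compact, NSS local topological group is locally isomorphic to a Lie group. Here my plan would mirror Goldbring's strategy. First, restrict $\fG$ to a symmetric cancellative neighborhood of $e$ so that all the partial-product manipulations are legitimate. The core difficulty — and the reason this resisted proof for so long — is that the classical Gleason–Yamabe machinery (Gleason metrics, one-parameter subgroups generated by $\exp$, the ``no small subgroups implies Lie'' argument via Haar measure and the adjoint representation) is built on \emph{global} structure: it needs arbitrary powers $g^n$ to be defined, it needs a translation-invariant Haar measure on the whole group, and it needs the limiting one-parameter subgroups $t\mapsto \exp(tX)$ to be honest homomorphisms of $\bR$. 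In a local group none of these is available a priori. Goldbring's insight is to use nonstandard analysis: pass to a sufficiently saturated nonstandard extension $\!{}^*\fG$, work inside the monad (infinitesimal neighborhood) of $e$, and there \emph{recover a genuine global group} — the quotient of the monad by the subgroup of elements infinitely close to $e$ in the Gleason sense — on which the Gleason–Yamabe theory can be run. The Lie group structure produced on this internal/standard-part object is then transported back, via the nonstandard hull construction, to a local Lie group structure on a neighborhood of $e$ in $\fG$, yielding the desired local isomorphism.

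\textbf{Where the obstacle sits.} I expect the hard part to be precisely the construction, inside the nonstandard framework, of a \emph{Gleason metric} adapted to the local group: a left-invariant-enough metric $d$ near $e$ satisfying the commutator estimate $d([a,b],e)\lesssim d(a,e)\,d(b,e)$ and the escape property ``if $a^k$ stays in a fixed small ball for $1\le k\le n$ then $d(a^n,e)\gtrsim n\,d(a,e)$,'' where every inequality and every iterate must be bookkept so as never to leave the domain $\eD(\fG)$ of definedness of the product. Once such a metric exists, the NSS hypothesis feeds in to kill the troublesome subgroup, and the remaining argument — exponential coordinates, the adjoint action on the tangent space, Ado/Lie's theorems in the local form (\cite[Ch 3]{Bou}) to integrate the resulting Lie algebra back to a local Lie group — is comparatively formal. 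Since the present paper only \emph{invokes} Theorem \ref{H5local} and does not reprove it, the honest course in this section is to state it, attribute it, record the trivial implications (c)$\Rightarrow$(a)$\Rightarrow$(b) for completeness, and refer the reader to \cite{Go} for (b)$\Rightarrow$(c); I would not attempt to reproduce Goldbring's nonstandard argument in full here.
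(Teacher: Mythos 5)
The paper itself offers no proof of Theorem \ref{H5local}: it is stated as an imported result and the reader is referred entirely to \cite{Go}. Your closing recommendation --- state it, attribute it, record the trivial implications, and defer the substantive direction to \cite{Go} --- therefore matches what the paper actually does, and your description of the genuinely hard step (b)$\Rightarrow$(c) via nonstandard analysis, monads of the identity, and a local Gleason--Yamabe machinery is a fair account of Goldbring's strategy.

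There is, however, one genuine error in your sketch: you classify (a)$\Rightarrow$(b) as ``the elementary end of the circle'' and propose to prove directly, by a curve-of-small-generators argument, that a locally Euclidean local group is NSS. This implication is not elementary, either globally or locally; it is the Montgomery--Zippin half of the solution to Hilbert's fifth problem. The escape argument you describe produces elements whose powers reach the boundary of a fixed ball, but turning that into a contradiction with local Euclideanness requires either invariance of domain applied to a map built from one-parameter subgroups or the dimension-theoretic induction on inverse limits of Lie quotients --- i.e.\ essentially the same machinery as the hard direction. (The group operation near $e$ in a locally Euclidean group need not resemble addition in $\bR^N$ in any way that your argument exploits; the only subgroup-free structure you can invoke is exactly what is to be proved.) The only implications that are genuinely formal are (c)$\Rightarrow$(a) and (c)$\Rightarrow$(b); the correct logical picture is that both (a)$\Rightarrow$(c) and (b)$\Rightarrow$(c) are hard, and (a)$\Rightarrow$(b) is obtained only by composing through (c). If you keep the sketch, you should either drop the claimed direct proof of (a)$\Rightarrow$(b) or defer it, too, to \cite{Go}.
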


\subsubsection{Local (topological and Lie) groups of transformations} \hfill \par

Let $(M,p)$ be a pointed smooth manifold. A {\it local topological group of $\cC^k$-transformations on $(M,p)$} is a tuple $G=(\fG,\eU_{\fG},\W_p,\eW,\Q)$ formed by: \begin{itemize}
\item[i)] a (local) topological group $\fG$ and a neighborhood $\eU_{\fG} \subset \fG$ of the unit;
\item[ii)] a neighborhood $\W_p \subset M$ of $p$;
\item[iii)] an open subset $\eW \subset \eU_{\fG} \times \W_p$ which contains both $\eU_{\fG}\times\{p\}$, $\{e\} \times \W_p$ and a continuous application $\Q: \eW \rar \W_p$;
\end{itemize} such that the following hold: \begin{itemize}
\item[$\bcdot$] for any $(a,b)\in (\eU_{\fG}\times\eU_{\fG}) \cap \eD(\fG)$ and $x \in \W_p$ it holds $$\Q(a,\Q(b,x))=\Q(a\cdot b, x) \,\, ,$$ provided that $(b,x), (a\cdot b,x) \in \eW$ and $(a,\Q(b,x)) \in \eW$;
\item[$\bcdot$] for any $a \in \eU_{\fG}$, the map $\Q(a) \= \Q(a,\cdot)$, defined on the open subset $\eW(a) \= \{x : (a,x) \in \eW\} \subset \W_p$, is of class $\cC^k$;
\item[$\bcdot$] $\Q(e)=\Id_{\W_p}$, i.e. $\Q(e,x)=x$ for any $x \in \W_p$.
\end{itemize}

It follows from the definition that for any $a \in \eU_{\fG}$ there exist a neighborhood $U \subset \W_p$ of $p$ and a neighborhood $V \subset \W_p$ of $\Q(a,p)$ such that $\Q(a)|_U: U \rar V$ is a $\cC^k$-diffeomorphism with inverse given by $(\Q(a)|_U\big)^{-1}=\Q(a^{-1})|_V$. We say that $G$ is {\it almost-effective} (resp. {\it effective}) if the set $$\big\{a \in \eU_{\fG} : \text{$\Q(a)$ fixes a neighborhood of $p$}\big\}$$ is discrete (resp. equal to \{e\}). We say also that $G=(\fG,\eU_{\fG},\W_p,\eW,\Q)$ is {\it locally compact} if $\fG$ is locally compact. We will tacitly assume that $\W_p$, $\eW$ are connected and that $\eW(a)$ is connected for any $a\in \eU_{\fG}$. \smallskip

Two local topological groups $G_i=(\fG_i,\eU_{\fG_i},\W_{p_i},\eW_i,\Q_i)$ of $\cC^k$-transformations acting on $(M_i,p_i)$, with $i=1,2$, are said to be {\it locally $\cC^k$-equivalent} if there exist \begin{itemize}
\item[i)] a neighborhood $\eU_{\zero} \subset \eU_{\fG_1}$ of the unit and a local isomorphism $\f: \fG_1 \rar \fG_2$ defined on $\eU_{\zero}$ with $\f(\eU_{\zero}) \subset \eU_{\fG_2}$;
\item[ii)] two nested neighborhoods $U_{\zero} \subset U \subset \W_{p_1}$ of $p_1$ and an open $\cC^k$-embedding $f: U \rar \W_{p_2}$ with $f(p_1)=p_2$;
\end{itemize} such that the following hold: \begin{itemize}
\item[$\bcdot$] $\eU_{\zero} \times U_{\zero} \subset \eW_1$, $\Q_1(\eU_{\zero} \times U_{\zero}) \subset U$ and $\f(\eU_{\zero}) \times f(U_{\zero}) \subset \eW_2$;
\item[$\bcdot$] for any $(a,x) \in \eU_{\zero}\times U_{\zero}$, it holds that $f\big(\Q_1(a,x)\big)=\Q_2\big(\f(a),f(x)\big)$.
\end{itemize} \smallskip

A local topological group of $\cC^k$-transformations $G=(\fG,\eU_{\fG},\W_p,\eW,\Q)$ on $(M,p)$ is called {\it local Lie group of $\cC^k$-transformations} if $\fG$ is a Lie group and the map $\Q$ is of class $\cC^k$.

\begin{rem} In perfect analogy with what occurs for global groups of transformations, by \cite[Thm 4]{BM} also here the second condition is redundant. Namely, if the (local) topological group $\fG$ is a Lie group and each map $\Q(a):\eW(a) \rar \W_p$ is of class $\cC^k$, then the map $\Q:\eW \rar \W_p$ is automatically of class $\cC^k$. \label{BMT4'} \end{rem}

If $(M,p)$ is equipped with a $\cC^k$-Riemannian metric $g$, then $G=(\fG,\eU_{\fG},\W_p,\eW,\Q)$ is called {\it local topological (resp. Lie) group of isometries} if each map $\Q(a): \eW(a) \rar \W_p$ is a local isometry of $(M,g)$. \medskip

\section{The Myers-Steenrod Theorem in low regularity} \label{dimMS2} \setcounter{equation} 0

As we pointed out in the Introduction, we now provide a proof of the version (MS2') of the Myers-Steenrod Theorem. We also show how it yields to a useful regularity property for homogeneous Riemannian manifolds. First, we recall the following crucial result, which is a consequence of Theorem \ref{H5}.

\begin{theorem}[\cite{MZ2} Thm 2, p. 208] Let $G=(\fG,\Q)$ be a topological group of $\cC^k$-transformations on a smooth manifold $M$, with $k \geq 1$. If $G$ is effective and locally compact, then $G$ is a Lie group of $\cC^k$-transformations. \label{main} \end{theorem}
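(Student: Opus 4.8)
The plan is to prove Theorem \ref{main} in two stages: first promote the abstract topological group $\fG$ to a Lie group, then promote the action $\Q$ to be jointly $\cC^k$. The second stage is immediate from Remark \ref{BMT4}: once $\fG$ is a Lie group and each slice $\Q(a)$ is $\cC^k$, the Bochner--Montgomery theorem \cite{BM} forces $\Q\colon\fG\times M\to M$ to be of class $\cC^k$, so that $G=(\fG,\Q)$ is a Lie group of $\cC^k$-transformations. Thus the entire content lies in equipping $\fG$ with a compatible Lie group structure, and by Theorem \ref{H5} this amounts to showing that $\fG$ has no small subgroups.

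To organize the first stage I would invoke the structure theory of locally compact groups underlying Theorem \ref{H5} (Gleason--Yamabe, see \cite{Gle, MZ1}): $\fG$ has an open subgroup $\fG'$ such that every neighborhood of $e$ in $\fG'$ contains a compact normal subgroup $N$ of $\fG'$ with $\fG'/N$ a Lie group. Since a topological group with an open Lie subgroup is itself a Lie group, it suffices to exhibit one such $N$ that is trivial, i.e. to prove the following. \emph{Claim.} Every sufficiently small compact subgroup $N\subset\fG$ is trivial. Granting the Claim, one chooses $N$ small enough to be trivial, so that $\fG'=\fG'/N$ is a Lie group and hence $\fG$ is a Lie group, completing the first stage.

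To prove the Claim I would average a smooth background metric $g_0$ on $M$ against normalized Haar measure on the compact group $N$, obtaining an $N$-invariant Riemannian metric $\bar g=\int_N\Q(n)^*g_0\,dn$; note that each $\Q(n)^*g_0$ is only $\cC^0$ because $\Q(n)$ is merely $\cC^1$. A standard averaging/fixed-point argument in the spirit of Montgomery--Zippin \cite{MZ2} then produces a point $x\in M$ fixed by all of $N$: when $N$ is small its orbits have small diameter, and the $N$-invariant functional $y\mapsto\int_N\td_{\bar g}(y,\Q(n)x_0)^2\,dn$ has, inside a small ball, an $N$-invariant minimizer set consisting of a single point. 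At such a fixed point the isotropy representation $\rho\colon N\to\GL(T_xM)$, $\rho(n)=d\Q(n)_x$, is a continuous homomorphism into the Lie group $\GL(T_xM)$; since $\GL(T_xM)$ is NSS and $N$ is small, $\rho(N)$ is a subgroup lying in an NSS neighborhood of the identity, hence trivial, so $d\Q(n)_x=\Id$ for every $n\in N$.

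It then remains to pass from trivial linearization to triviality of the action. By the Bochner linearization theorem for $\cC^1$ actions of compact groups, an $N$-action fixing $x$ with trivial differential there is trivial on a neighborhood of $x$. Running the averaging-and-linearization argument at every point $y$ of the common fixed-point set $S=\{y:\Q(n)y=y\ \forall n\in N\}$ shows $d\Q(n)_y=\Id$ for all $y\in S$, whence $S$ is open; being also closed and nonempty, $S=M$ by connectedness of $M$. Therefore $\Q(n)=\Id_M$ for all $n\in N$, and effectiveness forces $N=\{e\}$, proving the Claim. I expect the main obstacle to be exactly this low-regularity core: at the stated regularity $\bar g$ is only $\cC^0$, so the clean rigidity statement ``a $\cC^1$-isometry fixing a point with trivial differential is locally the identity''---which would settle matters for $k+\a>0$ via Theorem \ref{Isoreg}---is unavailable, and one must establish by hand both the existence of the fixed point $x$ and the Bochner linearization step for merely $\cC^1$ maps; similarly, the continuity of $n\mapsto d\Q(n)_x$ required for $\rho$ demands uniform estimates over the compact $N$ rather than mere joint continuity of $\Q$.
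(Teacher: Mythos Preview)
The paper does not prove Theorem~\ref{main}: it is quoted from Montgomery--Zippin \cite{MZ2} and used as a black box. The relevant comparison is therefore with the paper's proof of the \emph{local} analogue, Theorem~\ref{mainloc}, carried out via Proposition~\ref{prop2}. Your high-level plan---reduce via Gleason--Yamabe to showing that every small compact subgroup $N$ acts trivially, then invoke Theorem~\ref{H5} and Remark~\ref{BMT4}---is exactly the right architecture and matches the paper's strategy. The divergence is in how you establish that $N$ acts trivially.

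The paper's argument (Proposition~\ref{prop2}) averages the \emph{action} directly: in a coordinate ball it sets $T(x)=\int_{N}\Q(a,x)\,d\l(a)$, uses Montgomery's theorem \cite{Mon} to get that each partial derivative of $\Q(a)$ up to order $k$ varies continuously in $a$, so that $\|T-\Id\|_{\cC^k}$ is small when $N$ is small, whence $T$ is a local $\cC^k$-diffeomorphism by the Inverse Function Theorem; then Haar invariance gives $T\circ\Q(b)=T$, forcing $\Q(b)=\Id$ near the point. No fixed point is ever sought, and no auxiliary metric is introduced. Your route---average a metric to $\bar g$, locate a fixed point by minimizing $\int_N \td_{\bar g}(y,\Q(n)x_0)^2\,dn$, then analyze the isotropy representation and apply Bochner linearization---reaches the same destination, but the fixed-point step is a genuine soft spot: since $\bar g$ is only $\cC^0$, the squared distance $\td_{\bar g}(\,\cdot\,,q)^2$ need not be convex on small balls, so uniqueness of the minimizer (and hence its $N$-invariance by the usual argument) is not available. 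You cannot switch to the smooth background $g_0$ either, because then the functional is no longer $N$-invariant. Ironically, once you assume a fixed point with trivial isotropy, your ``Bochner linearization'' map is literally $T(x)=\int_N\Q(n,x)\,dn$, i.e.\ the paper's device---so the fixed-point detour buys nothing and costs you a gap.

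Two further remarks. The continuity of $n\mapsto d\Q(n)_x$ that worried you is precisely what \cite{Mon} supplies and what the paper invokes at the opening of the proof of Proposition~\ref{prop2}; you should cite it rather than flag it as an obstacle. And the final propagation step (the fixed set $S$ is open and closed, hence all of $M$) is fine and is how one globalizes the paper's local conclusion in the global setting of Theorem~\ref{main}. In short: keep your reduction to the Claim, but prove the Claim by averaging the action as in Proposition~\ref{prop2} rather than averaging a metric and hunting for a fixed point.
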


We also need the following property, which is essentially due to van Dantzig and van der Waerden \cite{DW}. Let $(M,g)$ be a $\cC^{k,\a}$-Riemannian manifold and $\Iso(M,g)$ its full isometry group. We recall that the compact-open topology $\tau_{co}$ on $\Iso(M,g)$ is generated by the basis formed by the sets $$(f;K;\e) \=\{h \in \Iso(M,g): \td_g(f(x),h(x))<\e \text{ for any $x \in K$} \} \,\, ,$$ with $f \in \Iso(M,g)$, $K\subset M$ compact, $\e>0$. On the other hand, the point-open topology $\tau_{po}$ on $\Iso(M,g)$ is generated by the subbasis formed by the sets $$(f;x;\e) \=\{h \in \Iso(M,g): \td_g(f(x),h(x))<\e \} \,\, ,$$ with $f \in \Iso(M,g)$, $x \in M$, $\e>0$.

\begin{lemma} On $\Iso(M,g)$, the compact-open topology coincides with the point-open topology. This topology is Hausdorff, it makes the group operations continuous and it is the coarsest topology with respect to which the action of $\Iso(M,g)$ on $M$ is continuous. Furthermore, with respect to such topology, $\Iso(M,g)$ is locally compact and its action on $M$ is proper. \label{Isoloccpt} \end{lemma}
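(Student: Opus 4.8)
The plan is to establish Lemma~\ref{Isoloccpt} by reducing everything to the classical theorem of van Dantzig--van der Waerden, using the fact that $(M,\td_g)$ is a separable, locally compact, connected length space (as recorded after Proposition~\cite{Bu}).

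\medskip

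\noindent\emph{Step 1: Equality of $\tau_{co}$ and $\tau_{po}$.}
Clearly $\tau_{po}\subset\tau_{co}$, so the point is the reverse inclusion. Fix $f\in\Iso(M,g)$, a compact $K\subset M$ and $\e>0$; I need to find finitely many $x_1,\dots,x_n\in M$ and $\delta>0$ so that $\bigcap_i(f;x_i;\delta)\subset(f;K;\e)$. Since isometries are $1$-Lipschitz, if $h\in\Iso(M,g)$ satisfies $\td_g(f(x_i),h(x_i))<\delta$ and $x\in\eB_g(x_i,\rho)$ then
\[
\td_g(f(x),h(x))\le \td_g(f(x),f(x_i))+\td_g(f(x_i),h(x_i))+\td_g(h(x_i),h(x))<2\rho+\delta .
\]
So I cover $K$ by finitely many balls $\eB_g(x_i,\rho)$ with $\rho=\e/4$, set $\delta=\e/2$, and conclude. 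This is the easy part.

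\medskip

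\noindent\emph{Step 2: Hausdorff, continuity of group operations, minimality.}
Hausdorffness of $\tau_{po}$ is immediate: if $f\neq h$ there is $x$ with $f(x)\neq h(x)$, and disjoint basic neighborhoods separate them. Continuity of composition and inversion in $\tau_{co}$ is the standard argument on a locally compact space, using again that the elements of $\Iso(M,g)$ are equicontinuous (indeed $1$-Lipschitz): given $(f;K;\e)$, the set $(f_1;K;\e/2)\cdot(f_2;f_1(K);\e/2)$ lands in it, and for inversion one uses that $f$ maps compacta to compacta together with $\td_g(f^{-1}(y),h^{-1}(y))=\td_g(f^{-1}(y),f^{-1}(h\,h^{-1}(y)))\le\td_g(y,h(h^{-1}(y)))$ rewritten suitably. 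That $\tau_{po}$ is the coarsest topology making $\Iso(M,g)\times M\to M$ continuous: any such topology must contain all sets $(f;x;\e)$ (preimages of metric balls under the evaluation map restricted to a slice), hence contains $\tau_{po}=\tau_{co}$; and conversely the action is $\tau_{co}$-continuous by the triangle inequality, exactly as in Step~1 localized at a point.

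\medskip

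\noindent\emph{Step 3: Local compactness and properness.}
This is where the real content lies, and it is precisely the van Dantzig--van der Waerden theorem \cite{DW}: for a connected, locally compact, $\sigma$-compact metric space $(X,\td)$, the isometry group with the compact-open topology is locally compact and acts properly. I would invoke this directly, the hypotheses being supplied by Proposition~\cite{Bu} (namely $(M,\td_g)$ is a length space inducing the manifold topology, separable, locally compact, and $M$ is connected by our standing assumption). Concretely, properness means: for compact $K,L\subset M$ the set $\{h\in\Iso(M,g):h(K)\cap L\neq\emptyset\}$ is compact in $\tau_{co}$; one shows it is closed (from $\tau_{co}$-continuity of evaluation) and relatively compact via Arzel\`a--Ascoli, since the elements are equi-$1$-Lipschitz and, by connectedness plus the orbit hitting $L$, the images $h(K)$ stay in a fixed bounded---hence, by local compactness of the length space, precompact---region; a diagonal/subsequence argument then produces a limit isometry. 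Local compactness of $\Iso(M,g)$ follows from properness by taking $K=L=\overline{\eB_g(p,r)}$: the stabilizer-type neighborhood $\{h:\td_g(h(p),p)\le r\}$ of the identity is contained in $\{h:h(K)\cap L\neq\emptyset\}$ hence has compact closure.

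\medskip

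\noindent The main obstacle is Step~3: the equicontinuity input is free (isometries are $1$-Lipschitz), but one must carefully check that orbits do not escape to infinity---this uses connectedness of $M$ together with the length-space structure so that $\td_g$-bounded sets are precompact---before Arzel\`a--Ascoli applies. Everything else (Steps~1 and~2) is a routine triangle-inequality exercise.
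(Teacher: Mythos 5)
Your proposal follows essentially the same route as the paper: the identity $\tau_{co}=\tau_{po}$ via the triangle inequality and the $1$-Lipschitz property of isometries (your Step 1 is the paper's argument verbatim, up to the choice of constants), the intermediate claims dismissed as standard properties of the compact-open topology, and the hard part --- local compactness and properness --- delegated to a classical citation. The paper invokes the main theorem of Manoussos--Strantzalos for that last claim, whereas you invoke van Dantzig--van der Waerden; since $M$ is connected and $(M,\td_g)$ is locally compact and separable, the classical theorem does apply, so this is an equally legitimate reference. One caveat on your supplementary sketch of Step 3: the assertion that $\td_g$-bounded sets are precompact because $(M,\td_g)$ is a locally compact length space is false in the absence of completeness (take the flat metric on an open ball of $\bR^m$: the whole space is bounded but not precompact), and a $\cC^{k,\alpha}$-Riemannian metric on a manifold need not be complete, so Hopf--Rinow is not available. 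Consequently the Arzel\`a--Ascoli step as you describe it does not go through; the genuine van Dantzig--van der Waerden argument circumvents this by means of the isometry-invariant function $r(x)=\sup\bigl\{r\le 1 : \overline{\eB_g(x,r)} \text{ is compact}\bigr\}$ together with a chain argument along the connected space $M$ to confine the images $h(x)$ to compact sets. Since you ultimately lean on the citation rather than on the sketch, this does not invalidate the proof, but the sketch as written would not survive being expanded into a full argument.
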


\begin{proof} We set $G\=\Iso(M,g)$ for short. Let us fix $f \in G$, $K\subset M$ compact, $\e>0$ and let $x_1,{\dots},x_N \in K$ be such that $K \subset \eB_g(x_1,{\textstyle\frac{\e}3})\cup{\dots}\cup \eB_g(x_N,{\textstyle\frac{\e}3})$. We have to show that $A \= \bigcap_{1\leq i\leq N}(f;x_i;{\textstyle\frac{\e}3})$ is contained in $(f;K;\e)$. So, let us consider $h \in A$ and $x \in K$. By construction, there exists $1\leq i \leq N$ such that $\td_g(x,x_i)<{\textstyle\frac{\e}3}$. But then $$\td_g(f(x),h(x)) \leq \td_g(f(x),f(x_i))+\td_g(f(x_i),h(x_i))+\td_g(h(x_i),h(x)) < \e$$ and hence $\tau_{co} \subset \tau_{po}$. Since the other inclusion is obvious, we conclude that $\tau_{co} =\tau_{po}$. The second claim is just a collection of some well known properties of the compact-open topology. We refer to the main theorem of \cite{MaSt} for the last claim. \end{proof}

We are now ready to prove the following

\begin{corollary}[Enhanced version of the Myers-Steenrod Theorem] Any closed group of isometries of a $\cC^{k,\a}$-Riemannian manifold, with $k+\a>0$, is a Lie group of isometries. \label{MS} \end{corollary}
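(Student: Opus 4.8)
The plan is to reduce Corollary~\ref{MS} to the classical statement Theorem~\ref{main} by upgrading the regularity of the manifold, at the cost of passing to a weaker but still sufficient differentiable structure. Concretely, let $G = (\fG,\Q)$ be a closed group of isometries of the $\cC^{k,\a}$-Riemannian manifold $(M,g)$, with $k+\a>0$. First I would record that, by Lemma~\ref{Isoloccpt}, the full isometry group $\Iso(M,g)$ is locally compact in the compact-open topology and acts properly (hence continuously) on $M$; since $G$ is closed in $\Diff^{k+1,\a}(M)$ and the inclusion $\Iso(M,g) \hookrightarrow \Diff^{k+1,\a}(M)$ is continuous and a homeomorphism onto its image (standard, using Theorem~\ref{Isoreg}), the group $G$ inherits local compactness and continuity of the action $\Q : \fG \times M \to M$. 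So $G$ is an \emph{effective} (as it is closed, hence in particular effective by our conventions) and locally compact topological group of $\cC^0$-transformations, but a priori not of $\cC^1$-transformations, so Theorem~\ref{main} does not yet apply.

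The key step is therefore to produce, on the underlying topological manifold $M$, a new $\cC^\infty$-atlas $\eA'$ with respect to which \emph{every} $\Q(a)$, $a\in\fG$, is a $\cC^1$- (indeed $\cC^\infty$-) diffeomorphism. There are two natural routes. The cleaner one: by Theorem~\ref{Isoreg} each $\Q(a)$ is already a $\cC^{k+1,\a}$-diffeomorphism with $k+1\ge 1$, so $G$ is \emph{a fortiori} a topological group of $\cC^1$-transformations for the original smooth structure on $M$; hence Theorem~\ref{main} applies verbatim and gives that $\fG$ is a Lie group and $\Q$ is jointly $\cC^1$. Then, invoking Remark~\ref{BMT4} (the Bochner--Montgomery theorem, \cite[Thm 4]{BM}), since each $\Q(a)$ is in fact $\cC^{k+1,\a}$ and $\fG$ is a Lie group, the joint map $\Q$ is automatically of class $\cC^{k+1,\a}$; combined with the fact that each $\Q(a)$ preserves $g$, this exhibits $G$ as a Lie group of isometries, which is exactly the assertion.

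Thus the proof skeleton is: (i) cite Lemma~\ref{Isoloccpt} to get local compactness of $\Iso(M,g)$ and hence of the closed subgroup $G$, and continuity of the action; (ii) invoke Theorem~\ref{Isoreg} to know each $\Q(a)$ is a $\cC^{k+1,\a}$-, in particular $\cC^1$-, diffeomorphism, so $G$ is a locally compact effective topological group of $\cC^1$-transformations of the smooth manifold $M$; (iii) apply Theorem~\ref{main} to conclude $\fG$ carries a Lie group structure making $\Q$ jointly $\cC^1$; (iv) apply Remark~\ref{BMT4} to upgrade $\Q$ to joint class $\cC^{k+1,\a}$ and conclude that $G$ is a Lie group of isometries. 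The one point requiring slight care — and the place I would spend most of the writing — is step~(i)--(ii): namely that the compact-open topology on $\Iso(M,g)$ (with which Lemma~\ref{Isoloccpt} endows it) agrees with the subspace topology from $\Diff^{k+1,\a}(M)$ in the definition of ``closed group of isometries'', so that closedness of $\Q(\fG)$ really does force $G$ to be locally compact and not merely a continuous injective image of an abstract group. This is standard for proper actions by isometries but deserves an explicit sentence; everything downstream is then a direct appeal to the quoted theorems.
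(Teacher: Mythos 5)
Your proposal is correct and follows essentially the same route as the paper: use Theorem~\ref{Isoreg} to see that each isometry is a $\cC^{k+1,\a}$-diffeomorphism, use Lemma~\ref{Isoloccpt} for local compactness, and apply Theorem~\ref{main}. The only differences are cosmetic: the paper argues directly for the full group $\Iso(M,g)$ and leaves the joint-regularity upgrade implicit, whereas you handle an arbitrary closed subgroup and add the explicit appeal to Remark~\ref{BMT4}.
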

\begin{proof}Let $(M,g)$ be a $\cC^{k,\a}$-Riemannian manifold, with $k+\a>0$, and consider its full isometry group $G=\Iso(M,g)$. Then, by means of Theorem \ref{Isoreg} and Lemma \ref{Isoloccpt}, $G$ is an effective group of $\cC^{k+1}$-transformation and $G$ is locally compact. Then, by Theorem \ref{main}, it is a Lie group of isometries and the thesis follows. \end{proof}

This corollary yields to the following improvement of a well known property of homogeneous Riemannian manifolds. As usual, a $\cC^{k,\a}$-Riemannian manifold $(M,g)$ is called {\it homogeneous} if it admits a closed, transitive group of isometries.

\begin{theorem} Any homogeneous $\cC^{0,\a}$-Riemannian manifold, with $\a>0$, is real analytic. \label{reghom} \end{theorem}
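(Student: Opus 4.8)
The plan is to realize $(M,g)$ as a homogeneous space $G/H$ of a Lie group and then to exploit that a $G$-invariant metric on $G/H$ is pinned down by a single $\Ad(H)$-invariant inner product. Since $(M,g)$ admits a closed transitive group of isometries, the full isometry group $G\=\Iso(M,g)$ acts transitively on $M$; arguing as in the proof of Corollary \ref{MS} — $G$ is locally compact and acts properly by Lemma \ref{Isoloccpt}, it is effective, and by Theorem \ref{Isoreg} each of its elements is a $\cC^{1,\a}$-diffeomorphism — Theorem \ref{main} shows that $G$ is a Lie group of $\cC^1$-transformations, so the action $\Q\colon G\times M\rar M$ is jointly of class $\cC^1$. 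Fix $p\in M$ and set $H\=\{a\in G:\Q(a,p)=p\}$, a compact subgroup by properness. The orbit map $\pi\colon G\rar M$, $\pi(a)\=\Q(a,p)$, is $\cC^1$ and $G$-equivariant, hence of constant rank (left translations of $G$ and the maps $\Q(a)$ being diffeomorphisms), and this rank equals $\dim M$ since $\pi$ is surjective and a constant-rank map cannot surject onto a manifold of strictly larger dimension. Thus $\pi$ is a $\cC^1$-submersion, and it descends to a $\cC^1$-diffeomorphism $\F\colon G/H\rar M$ which intertwines the canonical $G$-action on $G/H$ with $\Q$.

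Next I would bring in the real analytic structures. Being a Lie group, $G$ carries a compatible real analytic structure, and for the closed subgroup $H$ the quotient $G/H$ inherits a canonical real analytic structure for which $G\rar G/H$ is a real analytic submersion and the $G$-action $\Q\colon G\times G/H\rar G/H$ is real analytic. As $H$ is compact, fix an $\Ad(H)$-invariant complement $\gm$ of $\gh$ in $\gg$; then $\f\colon V\subset\gm\rar G/H$, $\f(X)\=\Exp_G(X)H$, is a real analytic chart around $o\=eH$, because $\dd\f_0\colon\gm\rar T_o(G/H)=\gg/\gh$ is the canonical isomorphism and $\Exp_G$ is real analytic; translating $\f$ by elements of $G$ gives real analytic charts near every point of $G/H$.

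The heart of the argument is then the following. The tensor $\bar g\=\F^*g$ is a Riemannian metric on $G/H$, a priori only of class $\cC^0$, and it is $G$-invariant because $\F$ is equivariant and $g$ is $\Iso(M,g)$-invariant. Hence its value at $o$ is an inner product $\la\cdot,\cdot\ra_o\=\bar g_o$ on $T_o(G/H)$, $\Ad(H)$-invariant since $H$ fixes $o$, and $\bar g$ is completely determined by it: for every $q\in G/H$ and every $a\in G$ with $\Q(a,q)=o$ we have $\bar g_q(u,v)=\la\dd\Q(a)_q(u),\dd\Q(a)_q(v)\ra_o$. Taking $q=\f(X)$, $a=\Exp_G(-X)$, and writing the coordinate frame of the chart $\f$, this reads
$$\bar g_{ij}(X)=\big\la\dd\Q(\Exp_G(-X))_{\f(X)}\big(\dd\f_X e_i\big),\ \dd\Q(\Exp_G(-X))_{\f(X)}\big(\dd\f_X e_j\big)\big\ra_o\,\,.$$
The right-hand side is a composition of real analytic maps — $X\mapsto\Exp_G(-X)$, the chart $\f$ and its Jacobian, the differential of the real analytic action $\Q\colon G\times G/H\rar G/H$, and the fixed bilinear form $\la\cdot,\cdot\ra_o$ — so each $\bar g_{ij}$ is real analytic, and by homogeneity $\bar g$ is real analytic near every point. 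Thus $(G/H,\bar g)$ is a real analytic Riemannian manifold, and $\F\colon(G/H,\bar g)\rar(M,g)$ is a metric isometry, so by Theorem \ref{Isoreg} it is a $\cC^{1,\a}$-diffeomorphism with $\F^*g=\bar g$. Transporting the canonical real analytic structure of $G/H$ to $M$ along $\F$ therefore endows $M$ with a real analytic structure, compatible with the given one, in which $g=(\F^{-1})^*\bar g$ is real analytic; hence $(M,g)$ is real analytic.

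The main obstacle is the regularity bookkeeping: because $g$ is only $\cC^{0,\a}$ and the identification $\F\colon G/H\to M$ is a priori only $\cC^1$, one cannot directly invoke the classical fact that a smooth homogeneous metric is real analytic. The crux is to observe that $G$-invariance collapses $\bar g$ to a single inner product whose $G$-translates vary real-analytically over $G/H$ — precisely because the $G$-action on $G/H$ is automatically real analytic — so that the low regularity of $g$ and of $\F$ plays no role in the analyticity of $\bar g$.
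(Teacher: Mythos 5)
Your proposal is correct and follows essentially the same route as the paper: pass to the (Lie) isometry group via Corollary \ref{MS} and Theorem \ref{Isoreg}, identify $M$ with $\fG/\fH$ by the $\cC^1$-diffeomorphic orbit map, observe that the induced invariant metric on $\fG/\fH$ is real analytic because it is determined by a single $\Ad(\fH)$-invariant inner product transported by the real analytic action, and transfer the analytic structure back through the metric isometry. You simply spell out several steps (constant rank of the orbit map, the coordinate formula for $\bar g_{ij}$) that the paper leaves implicit.
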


\begin{proof} Let $(M,g)$ be a $\cC^{0,\a}$-Riemannian manifold, with $\a>0$, and $G=(\fG,\Q)$ a closed, transitive topological group of isometries acting on $(M,g)$. Pick a distinguished point $x_{\zero} \in M$ and consider the {\it isotropy subgroup of $G$ at $x_{\zero}$}, i.e. $\fH \= \{a \in \fG: \Q(a,x_{\zero})=x_{\zero}\}$. From Corollary \ref{MS}, it follows that $G$ is a Lie group of isometries and, by means of Theorem \ref{Isoreg}, the map $\Q: \fG \times M \rar M$ is of class $\cC^1$. Then, $\fH$ is an embedded Lie subgroup of $\fG$ and we get the $\cC^1$-diffeomorphism \beq \q_{x_{\zero}}: \fG/\fH \rar M \,\, , \quad \q_{x_{\zero}}(a\fH)\=\Q(a,x_{\zero}) \,\, . \eeq Since $G$ acts by isometries, there exists a unique invariant $\cC^{\w}$-Riemannian metric $\tilde{g}$ on $\fG/\fH$ which makes the map $\q_{x_{\zero}}: (\fG/\fH,\tilde{g}) \rar (M,g)$ an isometry. From this the thesis follows. 
\end{proof} \medskip

\section{Proof of Theorem \ref{MAIN1}} \label{thmA} \setcounter{equation} 0

The purpose of this section is to give the proof of a local analogue of Theorem \ref{main}, namely

\begin{theorem} Let $G=(\fG,\eU_{\fG},\W_p,\eW,\Q)$ be a local topological group of $\cC^k$-transformations on a pointed smooth manifold $(M,p)$, with $k\geq1$. If $G$ is locally compact and effective, then $G$ is a local Lie group of $\cC^k$-transformations. \label{mainloc} \end{theorem}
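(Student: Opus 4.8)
The plan is to reduce the local statement to the global Theorem \ref{main} by a ``localize, then globalize'' argument. First I would invoke Goldbring's Theorem \ref{H5local}: since $\fG$ is locally compact, it suffices to show that $\fG$ is NSS, for then $\fG$ is locally isomorphic to a Lie group, and after replacing $\fG$ by a suitable restriction we may assume $\fG$ itself carries a local Lie group structure; then Remark \ref{BMT4'} upgrades the $\cC^k$-regularity of each $\Q(a)$ to joint $\cC^k$-regularity of $\Q$, giving the conclusion. So the heart of the matter is to prove the NSS property, and here is where the action on $(M,p)$ and the hypothesis $k\geq 1$ must be used: the abstract group $\fG$ by itself need not be NSS, so one has to exploit effectiveness to transport small subgroups of $\fG$ into small subgroups of a genuine (global, or at least Lie) transformation group where Theorem \ref{main} applies.

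The mechanism I would use is the standard ``frame bundle / jet'' trick adapted to the local setting. Fix a relatively compact coordinate chart around $p$ inside $\W_p$ and consider the action of (a restriction of) $\fG$ on the bundle of $k$-jets of local diffeomorphisms, or more concretely on the first-order frame bundle $F(M)$ together with enough derivative data to pin down a $\cC^k$ map; the key point is that for $k\geq 1$ a local $\cC^k$-transformation fixing $p$ together with its differential (and, inductively, higher derivatives up to order $k$) at $p$ must be the identity near $p$. Effectiveness then says the induced action of $\fG$ on this jet space is \emph{free} near the identity jet of $\mathrm{id}$. A free action of a locally compact group admits no small subgroups in the acting group — any sufficiently small subgroup would have to act trivially on a neighborhood of the base point, contradicting freeness unless it is trivial. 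To make this rigorous in the local category I would pass to a symmetric, cancellative, relatively compact restriction $\eU$ of $\fG$ (available by the discussion following Theorem \ref{H5}/\ref{H5local}), use local compactness to get that the orbit maps are proper on small sets, and then show any subgroup $H\subset\eU$ contained in a suitably chosen neighborhood of $e$ fixes a neighborhood of $p$ pointwise, whence $H=\{e\}$ by effectiveness.

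Concretely, the steps in order would be: (1) restrict $\fG$ to a symmetric cancellative relatively compact neighborhood $\eU$ of $e$ and shrink $\W_p$, $\eW$ accordingly, keeping $\eW(a)$ connected; (2) build the auxiliary $\cC^{k-1}$ action on the frame bundle (or $k$-jet bundle) over a small ball $B\ni p$, using $k\geq 1$ so that at least the differential is available, and observe it is continuous and locally compact; (3) prove the effectiveness of $G$ forces this auxiliary action to be free over a neighborhood of the fiber at $p$ — this is where one uses that a $\cC^k$ diffeomorphism, $k\geq1$, whose $1$-jet (resp. $k$-jet) at a point is trivial and which is sufficiently $\cC^0$-close to the identity must be the identity on a connected neighborhood, an argument via the implicit/inverse function theorem or a Taylor estimate; (4) deduce that $\fG$ (that is, $\eU$) is NSS: a nontrivial subgroup $H$ inside a small neighborhood of $e$ would act on the jet bundle with a common fixed jet arbitrarily close to that of $\mathrm{id}$, contradicting freeness, so $H=\{e\}$; (5) apply Theorem \ref{H5local} to get a local Lie group structure on $\fG$, and finally apply Remark \ref{BMT4'} to conclude $\Q$ is jointly $\cC^k$, so $G$ is a local Lie group of $\cC^k$-transformations.

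The main obstacle I anticipate is step (3)–(4): turning ``effectiveness'' into an honest freeness/NSS statement in the purely local setting, where one cannot iterate the group operation globally and must track domains of definition carefully. Subgroups of a local group already sit inside a fixed neighborhood with $H\times H\subset\eD(\fG)$, which helps, but one still has to verify that the elements of such an $H$, being $\cC^0$-small, genuinely act on a common neighborhood of $p$ and that their fixing the jet at $p$ propagates to fixing a neighborhood of $p$ — here the connectedness assumptions on $\eW(a)$ and $\W_p$, together with a continuity/openness argument (the fixed-point set of a local isometry-like map is open and closed), should close the gap. A secondary technical point is making sure the auxiliary frame-bundle action is itself locally compact and effective so that, if one prefers, Theorem \ref{main} can be applied to a genuine global group germ obtained from it; but the cleanest route is the direct NSS verification plus Goldbring's theorem, avoiding reconstructing a global action altogether.
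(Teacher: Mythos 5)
Your overall reduction is exactly the paper's: show that $\fG$ is NSS, invoke Goldbring's Theorem \ref{H5local} to get a local Lie group structure, and then use Remark \ref{BMT4'} to upgrade to joint $\cC^k$-regularity of $\Q$. The gap is in steps (3)--(4), where you establish NSS. Your key claim --- that a $\cC^k$ diffeomorphism ($k\geq 1$) whose $k$-jet at $p$ is trivial and which is sufficiently $\cC^0$-close to the identity must be the identity on a neighborhood of $p$ --- is false for general $\cC^k$ transformations. For instance, $f(x)=x+\e\,\phi(x)$ with $\phi$ smooth, vanishing to infinite order at the origin but not identically zero on any neighborhood of it, is a diffeomorphism arbitrarily close to the identity in every $\cC^k$-norm, has trivial $k$-jet at $0$, and is not the identity near $0$. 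The frame-bundle/jet freeness mechanism is special to isometries, which are determined by their $1$-jet at a point; the theorem here concerns arbitrary $\cC^k$-transformations, so effectiveness does not transfer to freeness of the induced jet action, and your auxiliary ``fixed-point set is open and closed'' step also fails, since fixed-point sets of general diffeomorphisms are closed but not open.

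What actually closes the gap --- and what the paper does in Proposition \ref{prop2} --- is to exploit the \emph{group} structure of a putative small subgroup $\fH$ rather than properties of its individual elements: replace $\fH$ by its closure, which is compact, and average the action against the normalized Haar measure, setting $T(x)\=\int_{\fH}\Q(a,x)\,d\l(a)$. The Bochner--Montgomery continuity of the derivatives of $\Q(a)$ in the group variable, together with the domain bookkeeping of Lemma \ref{star}, gives that $|\!|T-\Id|\!|_{\cC^k(\bar{B})}$ is small, so $T$ is invertible near $p$ by the Inverse Function Theorem; bi-invariance of $\l$ then yields $T\circ\Q(b)=T$ on a smaller neighborhood $V_{\zero}$, whence $\Q(b)|_{V_{\zero}}=\Id_{V_{\zero}}$ for every $b\in\fH$, and effectiveness forces $\fH=\{e\}$. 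This is essentially Bochner's linearization argument, and it is the ingredient your proposal is missing; the rest of your outline (restriction to a symmetric cancellative neighborhood, NSS, Goldbring, Remark \ref{BMT4'}) is sound.
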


\noindent of which Theorem \ref{MAIN1} is an immediate consequence. \smallskip

First, we need a preparatory lemma. For its statement, we introduce the following definition. Let $\fG$ be a local topological group. For any integer $N \geq 1$ and for any $a_1,{\dots},a_N,b \in \fG$, we say that {\it the element $a_1\cdot a_2 \cdot {\dots} \cdot a_N$ is well defined and equal to $b$}, for short $a_1\cdot a_2 \cdot {\dots} \cdot a_N = b$, if the following condition defined by induction on $N$ is satisfied: for any $1\leq i \leq N$ there exist $b_i, b_i' \in \fG$ such that $a_1\cdot {\dots} \cdot a_i = b_i$, $a_{i+1} \cdot {\dots} \cdot a_N = b_i'$, $(b_i', b_i'') \in \eD(\fG)$ and $b_i\cdot b_i'=b$. If $\eU \subset \fG$ is a neighborhood of the unit such that $a_1\cdot {\dots} \cdot a_N$ is well defined for any choice of $a_1, {\dots}, a_N \in \eU$, we set $\eU^N \=\big\{a_1 \cdot {\dots} \cdot a_N : a_1, {\dots}, a_N \in \eU \big\}$.

\begin{lemma} Let $G=(\fG,\eU_{\fG},\W_p,\eW,\Q)$ be a local topological group of $\cC^k$-transformations on a pointed smooth manifold $(M,p)$. Then: \begin{itemize}
\item[i)] For any compact set $K \subset \W_p$, there exists a neighborhood $\eU \subset \eU_{\fG}$ of the unit such that $\eU \times K \subset \eW$.
\item[ii)] For any fixed $N \in \bN$, there exists a neighborhood $\eW_N$ of $\{e\} \times \W_p$ in $\eW$ such that for any $$(a_1,x), \dots, (a_N,x) \in \eW_N$$ the element $a_1\cdot {\dots} \cdot a_N$ is well defined and $(b_i\cdot b_i',x), (b_i,\Q(b_i',x)) \in \eW$ for any $1 \leq i \leq N$, where $b_i \= a_1\cdot {\dots} \cdot a_i$ and $b_i' \= a_{i+1} \cdot {\dots} \cdot a_N$.
\end{itemize} \label{star} \end{lemma}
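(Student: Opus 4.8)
The plan is to prove the two statements of Lemma~\ref{star} in order, both by relatively soft topological arguments exploiting continuity of the partial action map $\Q$, the group operation $\nu$, and the inversion $\jmath$, together with the openness of the domains $\eW$, $\eD(\fG)$ and compactness.

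For part (i), I would argue as follows. Fix a compact $K\subset\W_p$. Since $\{e\}\times\W_p\subset\eW$ and $\eW$ is open in $\eU_{\fG}\times\W_p$, for each $x\in K$ there exist open neighborhoods $\eU_x\subset\eU_{\fG}$ of $e$ and $V_x\subset\W_p$ of $x$ with $\eU_x\times V_x\subset\eW$. The sets $\{V_x\}_{x\in K}$ cover $K$, so finitely many $V_{x_1},\dots,V_{x_r}$ cover $K$; then $\eU\=\eU_{x_1}\cap\dots\cap\eU_{x_r}$ is a neighborhood of $e$ in $\eU_{\fG}$ with $\eU\times K\subset\bigcup_j(\eU_{x_j}\times V_{x_j})\subset\eW$. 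This is the routine half.

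Part (ii) is the substantive one and I would prove it by induction on $N$. The case $N=1$ is trivial, taking $\eW_1\=\eW$ (the well-definedness of a single element is vacuous, and the two conditions reduce to $(a_1,x)\in\eW$). For the inductive step, suppose $\eW_{N-1}$ has been constructed. Writing $b_i\=a_1\cdots a_i$ and $b_i'\=a_{i+1}\cdots a_N$, the delicate point is that I need \emph{simultaneously}: all the products $b_i$ and $b_i'$ to be defined, the pairs $(b_i,b_i'),(b_{i-1},b_i')$ etc.\ to lie in $\eD(\fG)$, and the pairs $(b_i\cdot b_i',x)$ and $(b_i,\Q(b_i',x))$ to lie in $\eW$. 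The strategy is to shrink $\eW_N$ in stages: first choose a symmetric cancellative neighborhood $\eU_0$ of $e$ so small that $\eU_0^{N}$ is well defined (possible since $\nu$ is continuous, $\nu(e,\dots,e)=e$, and $\eD(\fG)$ is open --- this is a standard fact about local groups, cf.\ \cite[Sec 1.5.6]{Tao}, and in fact it is here implicitly that one uses that finite products near $e$ behave well); note that for $a_i\in\eU_0$ all the intermediate elements $b_i,b_i'$ automatically lie in a fixed compact neighborhood $L$ of $e$ (one can take $L\=\overline{\eU_0^{\,N}}$, assumed compact by shrinking $\eU_0$ using local compactness of $\fG$). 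Then apply part (i) to the compact set $K'\=\{p\}$, respectively to suitable compact sets, to find a neighborhood $\eU_1\subset\eU_0$ with $\eU_1\cdot L\subset$ (domain issues handled), and crucially use continuity of the map $(a,x)\mapsto\Q(a,x)$ near $\{e\}\times\W_p$: since $\Q(e,x)=x$, for $x$ ranging over a fixed point and $a$ near $e$ the image $\Q(b_i',x)$ stays close to $x$, so $(b_i,\Q(b_i',x))$ stays inside the open set $\eW$.

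The cleanest way to organize the shrinking is pointwise-then-compactify: for each $x\in\W_p$ one uses continuity of $\Q$ and $\nu$ at $(e,\dots,e,x)$ to find $\eU_x\times V_x$ such that for $a_1,\dots,a_N\in\eU_x$ and $x'\in V_x$ all the required memberships hold; the desired $\eW_N$ is then $\bigcup_{x}\big(\eU_x\times V_x\big)\cap\eW$, which is a neighborhood of $\{e\}\times\W_p$ since each $(e,x)$ lies in $\eU_x\times V_x$. For the inductive bookkeeping one writes $b_i=a_1\cdot(a_2\cdots a_i)$ and invokes the $N-1$ hypothesis on the tail $(a_2,\dots,a_N)$ and on the head, matching the inductive definition of ``well defined''; the associativity axiom of the local group guarantees the various parenthesizations agree where all terms are defined. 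I expect the main obstacle to be purely a matter of careful indexing: keeping track of exactly which pairs must be forced into $\eD(\fG)$ and which triples into $\eW$ at each induction level, and verifying that a \emph{single} neighborhood $\eW_N$ (not depending on the choice of the $a_i$) can be chosen to make all of them hold --- this is where compactness of $L$ and the uniform continuity it provides are essential. There is no hard analysis here, only the bookkeeping of nested neighborhoods in a local group.
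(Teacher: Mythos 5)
Part (i) of your argument is correct and is exactly the paper's proof. For part (ii) your overall strategy (shrink unit-neighborhoods so that $N$-fold products are well defined, then control the iterated action by continuity of $\Q$ near $\{e\}\times\W_p$) is in the right spirit, but the final assembly step has a genuine gap. You define $\eW_N\=\bigcup_{x}\big(\eU_x\times V_x\big)$ with the pieces $\eU_x\times V_x$ chosen \emph{independently} for each $x$. The conclusion of the lemma, however, quantifies over arbitrary $(a_1,x),\dots,(a_N,x)\in\eW_N$ sharing the same second coordinate: these $N$ pairs may lie in \emph{different} pieces $\eU_{y_1}\times V_{y_1},\dots,\eU_{y_N}\times V_{y_N}$ whose first factors are unrelated. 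Concretely, $a_1$ may range over all of $\eU_{y_1}$, which can be far larger than the neighborhood $\eU_{y_2}$ that your pointwise argument at $y_2$ required in order to keep $\Q(b_i',x)$ inside the relevant open sets and to keep the products inside $\eD(\fG)$; nothing then forces $a_1\cdot a_2$ to be defined or $(b_i,\Q(b_i',x))$ to lie in $\eW$. So the set you exhibit does not satisfy the stated property, and the sentence ``this is where compactness of $L$ and the uniform continuity it provides are essential'' does not repair this, since $L=\overline{\eU_0^{\,N}}$ only controls elements drawn from a single fixed $\eU_0$, not from the varying family $\{\eU_x\}$.

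The paper closes exactly this gap by imposing a monotonicity structure on the pieces before taking the union: it chooses an exhaustion $U^{(1)}_1\subset\subset\dots\subset\subset U^{(n)}_1\subset\subset U^{(n+1)}_1\subset\subset\dots$ of $\W_p$ by relatively compact open sets together with a \emph{decreasing} sequence $\eU^{(n+1)}\subset\eU^{(n)}$ of unit-neighborhoods (with the auxiliary chains $U^{(n)}_1\subset\subset\dots\subset\subset U^{(n)}_N$ and $\Q\big(\eU^{(n)}\times U^{(n)}_i\big)\subset U^{(n)}_{i+1}$ controlling the iterated action, and $\big(\eU^{(n)}\big)^N\subset\eU'^{(n)}\subset\wt\eD_N(\fG)$ controlling well-definedness of products, the latter via \cite[Lemma 2.5]{Go} rather than by induction on $N$). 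With $\eW_N=\bigcup_n\eU^{(n)}\times U^{(n)}_1$, any $N$ pairs $(a_j,x)$ with a common $x$ automatically lie in the single piece indexed by $n_*=\min_j n_j$, because the $\eU^{(n)}$ decrease while the $U^{(n)}_1$ increase. To salvage your write-up you would need to build an analogous compatibility into your family $\{\eU_x\times V_x\}$ (e.g.\ by passing to a countable monotone subfamily along an exhaustion, which is essentially the paper's construction); as written, the union you propose does not do the job.
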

\begin{proof} To prove the first claim, it is sufficient to observe that, since $\{e\}\times K$ is compact, there exists an finite open cover $\{\cI_1,{\dots},\cI_{\ell}\}$ of $\{e\}\times K$ inside $\eW$, where the open sets $\cI_i$ have the form $\cI_i=\eU_i \times U_i$ for any $1\leq i\leq {\ell}$. Then $\eU \subset \bigcap_{1\leq i\leq {\ell}} \eU_i$ satisfies (i).

We now recall that there exists a sequence of nested neighborhoods $$\{e\} \subset {\dots} \subset \wt{\eD}_N(\fG) \subset \wt{\eD}_{N-1}(\fG) \subset {\dots} \subset \wt{\eD}_2(\fG) \subset \fG$$ of the unit such that, for any $N\geq2$ and for any choice of $N$ elements $a_1,{\dots},a_N \in \wt{\eD}_N(\fG)$, the product $a_1\cdot {\dots} \cdot a_N$ is well defined (see e.g. \cite[Lemma 2.5]{Go}).

Fix $N \in \bN$. By (i) we can consider $N$ exhaustions $\big\{U^{(n)}_1\big\}, \dots, \big\{U^{(n)}_N\big\}$ of $\W_p$ by relatively compact open sets and two sequences $\big\{\eU^{(n)}\big\}$, $\big\{\eU'{}^{(n)}\big\}$ of neighborhoods of the unit in $\eU_{\fG} \subset \fG$ such that: \begin{itemize}
\item[$\bcdot$] $U^{(n)}_1 \subset\subset U^{(n)}_2 \subset\subset \dots \subset\subset U^{(n)}_N \subset\subset U^{(n+1)}_1$ and $\eU^{(n+1)}\subset \eU'{}^{(n+1)} \subset\eU^{(n)}$,
\item[$\bcdot$] $\eU'{}^{(n)} \times U^{(n)}_N \subset \big(\wt{\eD}_N(\fG) \times M\big) \cap \eW$,
\item[$\bcdot$] $\big(\eU^{(n)}\big)^N \subset \eU'{}^{(n)}$ and $\Q\big(\eU^{(n)}\times U^{(n)}_i\big)\subset U^{(n)}_{i+1}$ for any $1 \leq i \leq N-1$.
\end{itemize} It is immediate now to realize that for any $(a_1,x), \dots, (a_N,x) \in \eU^{(n)}\times U^{(n)}_1$ it holds that $a_1\cdot {\dots} \cdot a_N$ is well defined and that $(b_i\cdot b_i',x), (b_i,\Q(b_i',x)) \in \eW$ for any $1 \leq i \leq N$, with $a_1\cdot {\dots} \cdot a_i = b_i$ and $a_{i+1} \cdot {\dots} \cdot a_N = b_i'$. Therefore, if we define $\eW_N \= \bigcup_{k \in \bN} \eU^{(n)}\times U^{(n)}_1$, then claim (ii) follows. \end{proof}

We observe that Theorem \ref{mainloc} involves only local object. Hence, without loss of generality, we may assume that $(M,p)= (\bR^m,0)$. However, for the sake of clarity, in what follows we will still use the symbols $p$ and $\W_p$ for $0$ and the distinguished neighborhood of $0$, respectively.

\begin{prop} Let $k\geq1$ and $G=(\fG,\eU_{\fG},\W_p,\eW,\Q)$ a locally compact local group of $\cC^k$-transformations on $(M,p)= (\bR^m,0)$. Then, there exist a relatively compact neighborhood $\eV \subset \eU_{\fG}$ of the unit and a ball $B \subset \W_p$ centered at $p$ which satisfy the following property: if $\fH$ is a subgroup of $\fG$ entirely contained in $\eV$, then there exists a neighborhood $V_{\zero} \subset B$ of the origin such that $\Q(a)|_{V_{\zero}}=\Id_{V_{\zero}}$ for any $a \in \fH$. \label{prop2} \end{prop}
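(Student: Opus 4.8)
The plan is to show that a subgroup $\fH$ contained in a sufficiently small neighborhood of the unit must act trivially on a full neighborhood of $p$, by exploiting that the action is $\cC^k$ with $k\geq 1$ and that $\fH$, being a subgroup, is stable under arbitrarily long products. The key point is a \emph{derivative estimate}: if $a\in\fH$ is very close to $e$, then $\Q(a)$ is $\cC^1$-close to the identity on a fixed ball, so its differential at $p$ is close to $\mathrm{Id}$; but $\{d\Q(a)_p : a\in\fH\}$ is a subgroup of $\GL(m,\bR)$, and $\GL(m,\bR)$ has no small subgroups, so this forces $d\Q(a)_p=\mathrm{Id}$ for all $a\in\fH$ once $\eV$ is small enough.

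First I would use Lemma \ref{star}(i) to fix a ball $\bar B'\subset\W_p$ with $p$ in its interior and a relatively compact neighborhood $\eV_0\subset\eU_{\fG}$ of the unit with $\eV_0\times\bar B'\subset\eW$; shrinking $B'$ to a smaller concentric ball $B$, continuity of $\Q$ on the compact set $\overline{\eV_0}\times\bar B'$ together with the $\cC^k$-regularity of each $\Q(a)$ gives (after possibly shrinking $\eV_0$ to $\eV$) that all the maps $\Q(a)|_{\bar B}$, $a\in\eV$, together with their first derivatives, are uniformly controlled; in particular $a\mapsto d\Q(a)_p\in\GL(m,\bR)$ takes values in an arbitrarily small neighborhood of $\mathrm{Id}$ when $\eV$ is chosen small. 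The map $a\mapsto d\Q(a)_p$ is a local homomorphism into $\GL(m,\bR)$ (using $\Q(a\cdot b)=\Q(a)\circ\Q(b)$ near $p$ and the chain rule, valid since $\Q(b,p)=p$ for $b\in\fH$ — wait, this is only automatic if $\fH$ fixes $p$). Here I would first observe that a subgroup $\fH\subset\eV$ must fix $p$: if not, the orbit points $\Q(a^n,p)$ for $a\in\fH$ would eventually leave $B$, contradicting $\Q(\eV^n\times\{p\})\subset B$ which one arranges exactly as in the proof of Lemma \ref{star}(ii) (choose $\eV$ so that $\eV^n$ is defined and $\Q(\eV\times B)\subset B$ iteratively — but $n$ is unbounded, so instead use that $\fH$ being a subgroup means $a^n\in\fH\subset\eV$ for all $n$, and a standard compactness/continuity argument near the fixed-point set forces $\Q(a)(p)=p$). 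Granting $\Q(a)(p)=p$, the chain rule applies and $\rho:\fH\to\GL(m,\bR)$, $\rho(a)=d\Q(a)_p$, is a genuine group homomorphism with image in a tiny neighborhood of $\mathrm{Id}$; since $\GL(m,\bR)$ is NSS, $\rho(\fH)=\{\mathrm{Id}\}$.

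Having established $\Q(a)(p)=p$ and $d\Q(a)_p=\mathrm{Id}$ for all $a\in\fH$, I would upgrade this to $\Q(a)=\mathrm{Id}$ on a neighborhood of $p$ by a fixed-point / iteration argument. The cleanest route: set $F=\Q(a)$, a $\cC^1$ diffeomorphism of a neighborhood of $p$ fixing $p$ with $dF_p=\mathrm{Id}$, and such that all iterates $F^n=\Q(a^n)$ are defined on a \emph{fixed} ball $B$ (this is where one uses that $\fH$ is a subgroup, so $a^n\in\fH\subset\eV$, combined with the uniform control on $\eV\times B$ from the first paragraph — precisely, arrange $\Q(\eV\times B)\subset B$). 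Then $\{F^n\}$ is a family of $\cC^1$ maps of $\bar B$ into itself that is equicontinuous (uniform Lipschitz bound on $\eV\times\bar B$) and whose differentials at $p$ are all $\mathrm{Id}$; by an Arzelà–Ascoli argument any $\cC^0$-limit of a subsequence is again an isometry-like map fixing $p$ with differential $\mathrm{Id}$ there, and a standard argument (e.g.\ writing $F(x)=x+R(x)$ with $R(x)=o(|x|)$ and iterating, or passing through the fact that for effective group actions the only such map is the identity) shows $F=\mathrm{Id}$ near $p$. Finally, taking $V_{\zero}\subset B$ a neighborhood of $p$ on which every $\Q(a)|_{V_{\zero}}$ is defined and equals $\mathrm{Id}$ (a finite-intersection argument over a generating set is not available since $\fH$ may be uncountable, but the uniform estimates make $V_{\zero}$ depend only on $\eV,B$), we obtain the claim.

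\textbf{Main obstacle.} The delicate point is \emph{not} the NSS argument for $\GL(m,\bR)$ but the passage from "$d\Q(a)_p=\mathrm{Id}$" to "$\Q(a)=\mathrm{Id}$ near $p$", uniformly in $a\in\fH$: one must produce a \emph{single} neighborhood $V_{\zero}$ working for all of $\fH$ at once, which requires the iteration estimates to be uniform over the relatively compact $\eV$, and one must handle the fact that a priori $\Q(a)$ is only $\cC^1$ so the contraction-type arguments have to be done carefully (e.g.\ controlling $|\Q(a^n)(x)-x|$ via a discrete Gronwall estimate using the modulus of continuity of $x\mapsto d\Q(a)_x-\mathrm{Id}$, which is again uniform on $\eV\times\bar B$). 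I expect this uniform iteration step to be the technical heart of the proof.
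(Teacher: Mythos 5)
Your route --- linearize at $p$, invoke that $\GL(m,\bR)$ has no small subgroups, then upgrade by iteration --- is genuinely different from the paper's and is essentially the classical Bochner--Montgomery strategy; but as written it has a real gap at its very first step. Before $a\mapsto d\Q(a)_p$ can be a homomorphism you need $\fH$ to fix $p$, and the argument you offer for this does not work: if $\Q(a)(p)\neq p$ the iterates $\Q(a^n,p)$ need \emph{not} leave $B$, since every element of $\fH\subset\eV$ moves points of $B$ by less than $\e$ and the orbit of $p$ simply stays in a small ball (think of rotations about a nearby center $q\neq p$); so no contradiction arises. The fallback ``standard compactness/continuity argument near the fixed-point set'' is not standard --- the barycenter of the orbit $\fH\cdot p$ is not fixed when the $\Q(a)$ are merely diffeomorphisms rather than affine maps, and I see no way to produce the fixed point short of averaging the maps themselves. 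Two further soft spots: (i) the uniform control of first derivatives over $a\in\eV$ is not a consequence of joint continuity of $\Q$ plus $\cC^k$-ness of each $\Q(a)$; it is Montgomery's theorem that the partial derivatives of $\Q(a)$ depend continuously on $a$, which must be cited; (ii) the parenthetical ``for effective group actions the only such map is the identity'' is circular, since effectiveness constrains group elements, not arbitrary maps with $dF_p=\Id$. The final uniform iteration step, which you rightly call the technical heart, is left as a sketch.

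The paper sidesteps all of this with a single averaging. Replacing $\fH$ by its closure (a compact subgroup of $\bar{\eV}$) with normalized Haar measure $\l$, it sets $T(x)\=\int_{\fH}\Q(a,x)\,d\l(a)$. The uniform $\cC^k$-smallness of $\Q(a)-\Id$ for $a\in\eV$ gives $\|T-\Id\|_{\cC^k(\bar B)}\le C$, so $T$ is an embedding on some $V\subset B$ by the Inverse Function Theorem; bi-invariance of $\l$ gives $T\circ\Q(b)=T$ on a neighborhood $V_{\zero}$ chosen so that $\Q(a)(V_{\zero})\subset V$ for all $a\in\eV$, and injectivity of $T$ then forces $\Q(b)|_{V_{\zero}}=\Id$. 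This produces the fixed point, the triviality of the action, and the single neighborhood $V_{\zero}$ valid for all of $\fH$ in one stroke; I recommend substituting it for your first and third steps.
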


\begin{proof} By \cite[Thm 1]{Mon} and \cite[p. 685]{BM}, given $(a,x) \in \eW$, for any neighborhood $\eV_a \subset \eU_{\fG}$ of $a$ and for any ball $B \subset \W_p$ centered at $x$ such that $\eV_a \times B \subset \eW$, the following holds: {\it every partial derivative of the function $\Q(b)|_B: B \rar \bR^m$ up to order $k$ is continuous with respect to $b \in \eV_a$}. 

Since $\Q(e)$ is the identity map of $\W_p \subset \bR^m$, from Lemma \ref{star} it follows that there exist a relatively compact neighborhood $\eV \subset \fG$ of the unit and a ball $B \subset\subset \W_p$ of the origin such that $\bar{\eV} \times \bar{B} \subset \eW_N$, with $N \geq 2$ large enough, and the family of functions $\big\{(\Q(a)-\Id)|_{B}: B \rar \bR^m \big\}_{a \in \eV}$ is uniformly bounded in the Banach space $\cC^k(\bar{B})$ by a positive constant $C \in \bR$, which can be taken as small as one likes by restricting $\eV$. Let now $\fH$ be a subgroup of $\fG$ entirely contained in $\eV$. By taking the closure, one can suppose that $\fH$ is closed and hence compact. We define the map $$T: B \rar \bR^m \,\, , \quad T(x) \= \int_{\fH} \Q(a,x)\,d\l(a) \,\, ,$$ where $\l$ is the Haar measure of $\fH$, normalized in such a way that $\l(\fH)=1$. By differentiating under the integral sign, it follows that $T$ is of class $\cC^k$. Moreover $$|\!|T-\Id|\!|_{\cC^k(\bar{B})} \leq \int_{\fH} |\!|\Q(a)-\Id|\!|_{\cC^k(\bar{B})}\,d\l(a) \leq C \,\, .$$ By the Inverse Function Theorem, there exists an open neighborhood $V \subset B$ of the origin such that the restriction $T|_V:V \rar \bR^m$ is an open $\cC^k$-embedding and $T(V)\subset B$. On the other hand, we can choose a sufficiently small neighborhood $V_{\zero} \subset V$ of the origin such that $\Q(a)(V_{\zero}) \subset V$ for any $a \in \eV$. Then, from the bi-invariance of the Haar measure, for any $b \in \fH$ and for any $x \in V_{\zero}$ it follows that \begin{align*}\big(T \circ \Q(b)\big)(x) &= \int_{\fH} \big(\Q(a)\circ\Q(b)\big)(x)\,d\l(a) \\ &= \int_{\fH} \Q(a\cdot b)(x)\,d\l(a) = \int_{\fH} \Q(a)(x)\,d\l(a) = T(x) \,\, . \end{align*} Since $T$ is invertible in $V$, we get $\Q(b)|_{V_{\zero}}=\Id_{V_{\zero}}$ for any $b \in \fH$. \end{proof}

We are now able to conclude the proof of Theorem \ref{mainloc}. Suppose that $G$ is a locally compact and effective local topological group of $\cC^k$-transformations on $(\bR^m,0)$. From Proposition \ref{prop2}, we directly get that the abstract (local) group of $G$ is NSS. By Theorem \ref{H5local} and Remark \ref{BMT4'}, we get the thesis. \medskip

\section{Proof of Theorem \ref{MAIN2}} \label{thmB} \setcounter{equation} 0

We firstly recall that a $\cC^{k,\a}$-Riemannian manifold $(M,g)$ is said to be {\it locally homogeneous} if the pseudogroup of local isometries of $(M,g)$ acts transitively on $M$, i.e. if for any $x,y \in M$ there exist two open sets $U_x,U_y \subset M$ and a local isometry $f: U_x \rar U_y$ such that $x \in U_x$, $y \in U_y$ and $f(x)=y$.

Secondly, consider a local topological group of transformations $G=(\fG,\eU_{\fG},\W_p,\eW,\Q)$ on a pointed smooth manifold $(M,p)$. We recall that the {\it orbit of $G$ through $p$} is the set \begin{multline*} G(p) \= \Big\{\big(\Q(a_1) \circ{\dots}\circ\Q(a_{N})\big)(p) : N\geq1 \, , \,\, a_i \in \eU_{\fG} \text{ for any $1\leq i \leq N$} \, , \\ \big(\Q(a_{j+1}) \circ{\dots}\circ\Q(a_{N})\big)(p) \in \eW(a_j) \text{ for any $1\leq j \leq N-1$} \Big\} \,\, .\end{multline*} Motivated by the terminology for Lie algebra actions, we say that $G$ is {\it transitive} if $G(p)$ contains a neighborhood of the point $p$.

The above properties of local homogeneity for Riemannian manifolds and of transitivity for local groups of isometries are related as follows. If $(M,g)$ is a smooth locally homogeneous Riemannian manifold, then it is real analytic (see e.g. \cite[Thm 2.2]{Sp1} or \cite[Lemma 2.1]{BLS}). From this and \cite{No}, it follows that for any $p \in M$ there exists a local Lie group of isometries which acts transitively on $(M,g,p)$. Notice that our Theorem \ref{MAIN2} is a kind of converse of such a claim. \smallskip

Since we deal with locally homogeneous manifolds, in order to prove Theorem \ref{MAIN2} we need to define rigorously a {\it local analogous} of the usual quotient of Lie groups. In this direction, the following proposition details the construction sketched in \cite[Sec 3.1]{Ped1}.

\begin{prop} Let $\fG$ be a Lie group and $\fH \subset \fG$ be a (not necessarily closed) Lie subgroup. \begin{itemize}
\item[a)] There exist a neighborhood $\eU_{\fH} \subset \fH$ of the unit in the manifold topology of $\fH$ and two neighborhoods $\eU, \eV \subset \fG$ of the identity such that: $\eU_{\fH}$ is a sub-local group of $\fG$ with associated neighborhood $\eV$, $\eU_{\fH}$ is closed in $\eV$ and $\eU^6 \subset \eV$.
\item[b)] The binary relation on $\eU$ defined by $$a \sim b \overset{\rm def}{\iff} a^{-1}\cdot b \in \eU_{\fH}$$ is an equivalence relation on $\eU$ and the equivalence class $[a]_{\sim}$ of $a \in \eU$ verifies $[a]_{\sim} = (a\eU_{\fH}) \cap \eU$.
\item[c)] The quotient space $(\fG/\fH)_{(\eU_{\fH},\eU,\eV)} \= \eU/{\sim} = \big\{(a\eU_{\fH}) \cap \eU : a \in \eU\big\}$ is a topological manifold and it admits a real analytic structure, which is unique up to $\cC^{\w}$-diffeomorphism, with respect to which the following conditions hold: \begin{itemize}
\item[$\bcdot$] the canonical projection $\pi_{(\eU_{\fH},\eU,\eV)}: \eU \rar (\fG/\fH)_{(\eU_{\fH},\eU,\eV)}$ is a $\cC^{\w}$-submersion;
\item[$\bcdot$] the tuple $G_{(\fG,\fH),(\eU_{\fH},\eU,\eV)}\=(\fG,\eU,(\fG/\fH)_{(\eU_{\fH},\eU,\eV)},\eW,\Q)$ with $$\begin{gathered} \eW \= \big\{\big(a,(b\eU_{\fH}) \cap \eU\big) : a \in \eU ,\, b \in \eU ,\, a\cdot b \in \eU \big\} \,\, , \\ \Q: \eW \rar (\fG/\fH)_{\eU_{\fH},\eU,\eV} \,\, , \quad \Q(a)\big((b\eU_{\fH}) \cap \eU\big) \= ((a \cdot b)\eU_{\fH}) \cap \eU \end{gathered}$$ is a local Lie group of $\cC^{\w}$-transformations acting transitively on $\big((\fG/\fH)_{(\eU_{\fH},\eU,\eV)},(e\eU_{\fH})\cap\eU\big)$.
\end{itemize} 
\item[d)] If $(\eU_{\fH}, \eU, \eV)$ and $(\eU'_{\fH}, \eU', \eV')$ are two triples both satisfying all conditions in (a), then $G_{(\fG,\fH),(\eU_{\fH},\eU,\eV)}$ is locally $\cC^{\w}$-equivalent to $G_{(\fG,\fH),(\eU'_{\fH},\eU',\eV')}$.
\end{itemize} \label{locfact} \end{prop}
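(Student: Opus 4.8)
The plan is to establish items (a)--(d) essentially by transporting the classical structure of the homogeneous space $\fG/\fH$ to a small neighborhood of the identity coset, being careful because $\fH$ need not be closed. Throughout I will work with the intrinsic manifold topology of $\fH$ (under which $\fH$ is an embedded Lie group immersed in $\fG$) and with a local cross-section to the foliation of $\fG$ by the left $\fH$-orbits.

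For (a), I would first fix a vector space complement $\gm$ to $\Lie(\fH)$ in $\Lie(\fG)$ and use the map $(\xi,\eta)\mapsto \exp_{\fG}(\xi)\exp_{\fG}(\eta)$, $\xi\in\gm$, $\eta\in\Lie(\fH)$, which is a local diffeomorphism near the origin onto a neighborhood of $e$ in $\fG$. Restricting, I obtain a neighborhood $\eV$ of $e$, a slice $S=\exp_{\fG}(\gm\cap W)$ and an $\fH$-piece such that $\eV$ is diffeomorphic to $S\times(\text{slice in }\fH)$; then $\eU_{\fH}\=\exp_{\fG}(\Lie(\fH)\cap W)$ is relatively closed in $\eV$ precisely because it is a slice of the product decomposition. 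Shrinking $\eV$ to a smaller $\eV_0$ and then choosing $\eU$ with $\eU^6\subset\eV_0\subset\eV$ (possible by continuity of multiplication and associativity, cf. \cite[Lemma 2.5]{Go}), the sub-local group axioms for $\eU_{\fH}$ with associated neighborhood $\eV$ follow from the fact that $\eU_{\fH}$ is open in $\fH$ and $\fH$ is closed under the (global) operations of $\fG$ restricted to $\fH$.

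For (b) the verification is routine: reflexivity uses $e\in\eU_{\fH}$, symmetry uses $\eU_{\fH}=\eU_{\fH}^{-1}$ (after shrinking $\eU_{\fH}$ to be symmetric) together with $\eU^2\subset\eV$ and the sub-local group property, and transitivity uses $\eU^3\subset\eV$. The identification $[a]_{\sim}=(a\eU_{\fH})\cap\eU$ is immediate from the definition. For (c), the key point is that $\pi$ is, in the slice coordinates from (a), just the projection $S\times(\text{slice in }\fH)\to S$, so $(\fG/\fH)_{(\eU_{\fH},\eU,\eV)}$ inherits a real analytic manifold structure making $\pi$ a $\cC^{\w}$-submersion; uniqueness up to $\cC^{\w}$-diffeomorphism follows because any two such structures have $\pi$ as a common submersion, hence the identity descends to a $\cC^{\w}$-diffeomorphism. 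That $\Q$ is well defined requires checking that $a\cdot b\in\eU$, $b\sim b'$ imply $a\cdot b\sim a\cdot b'$ and $a\cdot b'\in\eU$; this is where one genuinely uses $\eU^6\subset\eV$, since one must manipulate products $b^{-1}\cdot(a^{-1}\cdot(a\cdot b'))$ and keep them inside $\eV$. Real analyticity of $\Q$ then follows by descending the (analytic) left translation through the submersion $\pi$, and transitivity through $(e\eU_{\fH})\cap\eU$ holds because left translations already act transitively near $e$ in $\fG$. Remark \ref{BMT4'} handles the redundancy of the smoothness hypothesis on $\Q$.

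Finally, for (d), given two admissible triples, I would note that on a common small neighborhood $\eU_{\zero}\subset\eU\cap\eU'$ of $e$ the two equivalence relations agree (both cut out by the germ of $\fH$ at $e$), so the set-theoretic identity $\eU_{\zero}/{\sim}\to\eU_{\zero}/{\sim'}$ is well defined; in slice coordinates it is the identity on the slice $S$, hence a $\cC^{\w}$-diffeomorphism, and it visibly intertwines the two local actions $\Q$, $\Q'$ together with the identity local isomorphism of $\fG$. Packaging this data $(\f=\Id_{\fG},\ f=\text{descended identity})$ gives the required local $\cC^{\w}$-equivalence of $G_{(\fG,\fH),(\eU_{\fH},\eU,\eV)}$ and $G_{(\fG,\fH),(\eU'_{\fH},\eU',\eV')}$. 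The main obstacle, and the place demanding the most care, is the bookkeeping of ``products staying inside the prescribed neighborhoods'': one must fix the chain of inclusions $\eU\subset\eU^2\subset\cdots\subset\eU^6\subset\eV$ at the outset and then invoke it at every verification (well-definedness of $\sim$, of $\Q$, and of the equivalence in (d)), since none of these steps can be performed with a single ``$\eU^2\subset\eV$'' hypothesis.
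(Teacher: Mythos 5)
Your overall route coincides with the paper's: the paper dismisses (a) as straightforward, quotes Goldbring's Lemma 2.13 for (b), adapts the classical construction of $\fG/\fH$ for a closed subgroup to prove (c), and for (d) compares the two quotients on a small neighbourhood where the two local slices of $\fH$ agree --- exactly the slice/product-chart picture you set up. So the question is whether your filled-in details are sound, and there is one genuine gap, located precisely at the point where the non-closedness of $\fH$ matters.

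In (a) you justify the sub-local group axioms for $\eU_{\fH}$ by saying that $\eU_{\fH}$ is open in $\fH$ and that $\fH$ is closed under the group operations of $\fG$. This only yields $a_1\cdot a_2\in\fH$; the axiom you must verify is that $a_1\cdot a_2\in\eV$ forces $a_1\cdot a_2\in\eU_{\fH}$, and for a non-closed $\fH$ the implication ``$x\in\fH\cap\eV\Rightarrow x\in\eU_{\fH}$'' is simply false: take $\fG$ a torus and $\fH$ an irrational one-parameter subgroup; then $\fH\cap\eV$ contains points lying on local leaves other than the one through $e$, no matter how small $\eV$ is. The correct argument uses your own product chart: shrink $W$ so that, by Baker--Campbell--Hausdorff, $\eU_{\fH}\cdot\eU_{\fH}\subset\exp_{\fG}(\gh\cap W')$ for a slightly larger ball $W'$ on which the map $(\xi,\eta)\mapsto\exp_{\fG}(\xi)\exp_{\fG}(\eta)$ is still injective; then $\exp_{\fG}(\gh\cap W')\cap\eV=\eU_{\fH}$, and the closure axiom follows. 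The same refinement is what legitimises the step in (d) where you assert that the two equivalence relations agree because both are ``cut out by the germ of $\fH$ at $e$'': as in the paper's proof, one needs a neighbourhood $\eU_2$ on which $\eU_{\fH}\cap\eU_2=\eU'_{\fH}\cap\eU_2$ together with a smaller $\eU_{\zero}$ satisfying $(\eU_{\zero})^2\subset\eU_2$, so that $a^{-1}\cdot b$ actually lands where the two slices coincide. With these repairs the remainder of your write-up --- the direct verification of (b) in place of the citation to Goldbring, the submersion structure in (c), and the well-definedness of $\Q$ via $\eU^6\subset\eV$ --- is fine and matches the intended proof.
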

\begin{proof} The proof of (a) is straightforward, while (b) is the statement of \cite[Lemma 2.13]{Go}. To prove (c), one can easily adapt the well known proof of the corresponding statement for the quotient of a Lie group with respect to a closed subgroup (see e.g. \cite[Ch II, Sec 4]{Hel}). Finally, to prove (d), let us consider two neighborhoods $\eU_1,\eU_2 \subset \fG$ of the unit such that $(\eU_1)^2 \subset \eU_2$ and $\eU_{\fH} \cap \eU_2= \eU'_{\fH} \cap \eU_2$. Then let us pick a neighborhood $\eU_{\zero} \subset \eU \cap \eU' \cap \eU_1$ of the unit in $\fG$. One can directly check that the map $$\pi_{(\eU_{\fH},\eU,\eV)}(\eU_{\zero}) \rar \pi_{(\eU'_{\fH},\eU',\eV')}(\eU_{\zero}) \,\, , \quad (a\eU_{\fH}) \cap \eU \mapsto (a\eU'_{\fH}) \cap \eU'$$ is a $\cC^{\w}$-diffeomorphism. \end{proof}

Given a Lie group $\fG$ together with a Lie subgroup $\fH$, we call {\it admissible triple for $\fH$ in $\fG$} any choice of $(\eU_{\fH}, \eU, \eV)$ as in (a) and {\it local factor space of $\fG$ modulo $\fH$} any quotient $(\fG/\fH)_{(\eU_{\fH}, \eU, \eV)}$ as in (c). Notice that $\fH$ is closed in $\fG$ if and only if $(\fH,\fG,\fG)$ is an admissible triple for $\fH$ in $\fG$ and, in that case, $(\fG/\fH)_{(\fH,\fG,\fG)}=\fG/\fH$. For other details concerning local factor spaces and locally homogeneous metrics, see \cite{Mos} and \cite{Sp2}. \smallskip

Let now $G=(\fG,\eU_{\fG},\W_p,\eW,\Q)$ be an almost-effective local Lie group of $\cC^k$-transformations on $(M,p)$ and suppose that $k\geq2$. Let also $\gg \= \Lie(\fG)$ and $\exp:\gg \rar \fG$ the Lie exponential of $\fG$. For any $X \in \gg$, we consider the open set $$\eW^X \= \{(t,x) \in \bR \times \W_p : (\exp(tX),x) \in \eW \}$$ and the map of class $\cC^k$ $$\Q^X : \eW^X \rar M \,\, , \quad \Q^X(t,x) \= \Q\big(\exp(tX),x\big) \,\, .$$ This allows to consider the differential \beq \Q_*: \gg \rar \cC^{k-1}(\W_p;TM|_{\W_p}) \,\, , \quad \Q_*(X)_x \= \frac{d}{dt}\Q^X(t,x)\Big|_{t=0} \,\, . \eeq In full analogy with the theory of Lie group actions, one can prove that the map $\Q_*$ is $\bR$-linear, injective and, for any $X,Y \in \gg$ \begin{gather}
\Q_*\big(\!\Ad(a)X\big)_{\Q(a,x)}=\diff(\Q(a))_x\big(\Q_*(X)\big)_x \quad \text{ for any $(a,x) \in \eW$} \,\, , \nonumber \\
\Q_*\big([X,Y]\big)=-[\Q_*(X),\Q_*(Y)] \,\, . \label{brack}
\end{gather} Let us now define $\gh \= \big\{X \in \gg : \Q_*(X)_p=0 \big\}$. By \eqref{brack}, it follows that $\gh$ is a Lie subalgebra of $\gg$ and so we can consider the unique connected Lie subgroup $\fH$ of $\fG$ such that $\Lie(\fH)=\gh$. We call it the {\it abstract isotropy subgroup of $G$ at $p$}. Notice that $G$ is almost-effective if and only if $\gh$ does not contain any non-trivial ideal of $\gg$, while $G$ is effective if and only if $\fH$ does not contain any non-trivial normal subgroup of $\fG$. As expected, the following proposition holds.

\begin{prop} For any admissible triple $(\eU_{\fH},\eU,\eV)$ for $\fH$ in $\fG$, $G$ is locally $\cC^k$-equivalent to the local Lie group of $\cC^{\w}$-transformations $G_{(\fG,\fH),(\eU_{\fH},\eU,\eV)}$ defined in (c) of Proposition \ref{locfact}. \label{loceqG} \end{prop}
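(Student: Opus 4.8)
The plan is to carry over to the local category the classical identification of a transitive action with a coset action via the orbit map. Fix the admissible triple $(\eU_\fH,\eU,\eV)$, abbreviate $N\=(\fG/\fH)_{(\eU_\fH,\eU,\eV)}$ and $q_\zero\=(e\eU_\fH)\cap\eU\in N$, and let $\pi\=\pi_{(\eU_\fH,\eU,\eV)}\colon\eU\to N$ be the $\cC^\w$-submersion of Proposition \ref{locfact}(c). After shrinking $\W_p$ and $\eU_\fG$ — using Lemma \ref{star}(i) to secure $(\eU\cap\eU_\fG)\times\{p\}\subset\eW$ — I would introduce the local orbit map
\[
\q_p\colon\eU\cap\eU_\fG\to\W_p\,\, ,\qquad \q_p(a)\=\Q(a,p)\,\, ,
\]
which is of class $\cC^k$. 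Two facts drive the argument. \emph{(a) $\fH$ fixes $p$ near the unit:} for $X\in\gh$ the curve $\g(t)\=\Q(\exp(tX),p)$ satisfies, by the local action identity, the autonomous ODE $\dot\g=\Q_*(X)_\g$ with $\g(0)=p$; since $\Q_*(X)$ is of class $\cC^{k-1}$ with $k-1\geq1$, hence locally Lipschitz, uniqueness of solutions together with $\Q_*(X)_p=0$ forces $\g\equiv p$ for small $t$, so $\Q(\exp X,p)=p$ for $X\in\gh$ near $0$, and writing $h\in\eU_\fH$ close to $e$ as $\exp X$ with $X\in\gh$ small yields $\Q(h,p)=p$. \emph{(b) The differential of $\q_p$:} from $\q_p(a\cdot\exp(tX))=\Q\big(a,\Q(\exp(tX),p)\big)$ and the fact that each $\Q(a)$ is a local diffeomorphism, one reads off that $\q_p$ has locally constant rank, with $\ker\big(\diff\q_p|_e\big)=\{X\in\gg:\Q_*(X)_p=0\}=\gh$ by the very definition of $\gh$; this rank equals $\dim\gg-\dim\gh=\dim N$.

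Granting (a) and (b), the equivalence is assembled as follows. By (a), $\q_p$ is constant along the fibres $(a\eU_\fH)\cap\eU$ of $\pi$ described in Proposition \ref{locfact}(b), so — restricting to a neighborhood $\eU_\zero\subset\eU\cap\eU_\fG$ of $e$ so small that $\pi(\eU_\zero)$ is open in $N$ and all the products below remain legitimate — $\q_p$ descends to a continuous $g\colon\pi(\eU_\zero)\to\W_p$ with $\q_p=g\circ\pi$ and $g(q_\zero)=p$; since $\pi$ is a $\cC^\w$-submersion, $g$ is of class $\cC^k$. Its differential at $q_\zero$ is injective by (b), the kernel $\gh$ being exactly the vertical space of $\pi$ at $e$; hence (here $\dim N=\dim M$, i.e.\ $\Q_*(\gg)$ spans $T_pM$) the inverse function theorem makes $g$ a $\cC^k$-diffeomorphism from a neighborhood of $q_\zero$ onto a neighborhood $U$ of $p$. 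Put $f\=g^{-1}\colon U\to N$ and let $\f\colon\fG\to\fG$ be the identity; choosing nested $U_\zero\subset U$ and shrinking $\eU_\zero$ so that $\eU_\zero\times U_\zero\subset\eW$, $\Q(\eU_\zero\times U_\zero)\subset U$ and $\f(\eU_\zero)\times f(U_\zero)$ lies in the domain of the action of $G_{(\fG,\fH),(\eU_\fH,\eU,\eV)}$, the intertwining becomes a tautology: for $x\in U_\zero$ write $x=\q_p(b)$, so $f(x)=(b\eU_\fH)\cap\eU$, and then
\[
f\big(\Q(a,x)\big)=f\big(\Q(a\cdot b,p)\big)=\big((a\cdot b)\eU_\fH\big)\cap\eU=\Q\big(\f(a),f(x)\big)\,\, ,
\]
the last equality being the definition of the action of $G_{(\fG,\fH),(\eU_\fH,\eU,\eV)}$ in Proposition \ref{locfact}(c). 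This exhibits the required local $\cC^k$-equivalence.

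The difficulty here is organizational rather than conceptual: facts (a), (b) and the descent of $\q_p$ each hold only after passing to sufficiently small neighborhoods of $e$ and of $p$, and the displayed identities make sense in the local category only once a single common domain has been fixed on which the local action identity, the ODE uniqueness argument, the fibre description of $\pi$ and the inverse function theorem are simultaneously valid. This is exactly the ``finitely many restrictions'' bookkeeping handled by Lemma \ref{star} and the cancellativity conventions of Section \ref{prelGT}, so no further machinery is needed — only care in the choice of $\eU_\zero$, $U_\zero$, $U$. I would also note that $k\geq2$ enters solely through the Lipschitz regularity of $\Q_*(X)$ needed for uniqueness in (a), and that effectivity of $G$ plays no role in the argument.
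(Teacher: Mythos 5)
Your proposal is correct and follows the same route as the paper: the paper's proof simply exhibits the pair $(\Id_{\fG},\,\q_{x_{\zero}})$ with $\q_{x_{\zero}}\big((a\eU_{\fH})\cap\eU\big)\=\Q(a,p)$ and asserts that it gives the local $\cC^k$-equivalence, which is exactly the (inverse of the) descended orbit map $g$ you construct. The details you supply --- $\eU_{\fH}$ fixing $p$ via ODE uniqueness, the identification of $\ker\diff\q_p|_e$ with $\gh$ as the vertical space of $\pi$, and the transitivity needed to invert $g$ onto a neighborhood of $p$ --- are precisely what the paper leaves implicit.
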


\begin{proof} Let $(\eU_{\fH},\eU,\eV)$ be an admissible triple for $\fH$ in $\fG$ and choose a sufficiently small neighborhood of the unit $\eU_{\zero} \subset \eU \cap \eU_{\fG}$. Then, the identity map $\Id_{\fG}: \fG \rar \fG$ and the application $$\q_{x_{\zero}}:\pi_{(\eU_{\fH},\eU,\eV)}(\eU_{\zero}) \rar M \,\, , \quad \q_{x_{\zero}}((a\eU_{\fH})\cap \eU) \= \Q(a,p)$$ give rise to a local $\cC^k$-equivalence between $G$ and $G_{(\fG,\fH),(\eU_{\fH},\eU,\eV)}$. \end{proof}

Let now $(M,g)$ be a locally homogeneous $\cC^1$-Riemannian manifold and assume that there exist a point $p\in M$ and a locally compact, effective local topological group of isometries $G=(\fG,\eU_{\fG},\W_p,\eW,\Q)$ which acts transitively on $(M,g,p)$.

\begin{lemma} For any fixed $x_{\zero} \in M$, there exists a neighborhood $U_{x_{\zero}} \subset M$ of $x_{\zero}$ and an open $\cC^2$-embedding $\f_{x_{\zero}}: U_{x_{\zero}} \rar \bR^m$ such that the pulled-back metric $(\f_{x_{\zero}}^{-1})^*g$ on the open set $\f_{x_{\zero}}(U_{x_{\zero}}) \subset \bR^m$ is real analytic. \label{AC2lh} \end{lemma}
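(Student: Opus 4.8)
The plan is to reduce, in a neighborhood of $x_{\zero}$, the local group of isometries $G$ to a local group acting on an open subset of $\bR^m$ by real analytic transformations, exploit Theorem \ref{mainloc} to promote $G$ to a local Lie group of $\cC^2$-transformations, and then use the local factor space construction of Proposition \ref{locfact} together with Proposition \ref{loceqG} to transport a real analytic metric from $(\fG/\fH)_{(\eU_{\fH},\eU,\eV)}$ to a neighborhood of $x_{\zero}$.

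First I would use local homogeneity: by definition there is a local isometry $\psi: U_{x_{\zero}} \rar U_p$ with $\psi(x_{\zero})=p$, so it suffices to prove the statement at the distinguished point $p$, for which a transitive $G=(\fG,\eU_{\fG},\W_p,\eW,\Q)$ is available by hypothesis. Next, by Theorem \ref{Isoreg} the metric $g$ is of class $\cC^2$ (being $\cC^1$ forces $k=1,\a=0$, hence any metric isometry is $\cC^2$), so each $\Q(a)$ is a $\cC^2$-local isometry and $G$ is a local topological group of $\cC^2$-transformations; since $G$ is locally compact and effective, Theorem \ref{mainloc} makes $G$ a local Lie group of $\cC^2$-transformations, with $\fG$ a genuine Lie group and $\Q$ of class $\cC^2$. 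Then I form $\gg=\Lie(\fG)$, the abstract isotropy subgroup $\fH \subset \fG$ with $\Lie(\fH)=\gh=\{X:\Q_*(X)_p=0\}$, and pick an admissible triple $(\eU_{\fH},\eU,\eV)$; by Proposition \ref{loceqG}, $G$ is locally $\cC^2$-equivalent to $G_{(\fG,\fH),(\eU_{\fH},\eU,\eV)}$ via a $\cC^2$-embedding $\q_{x_{\zero}}$ of (a neighborhood of the base point in) the real analytic manifold $(\fG/\fH)_{(\eU_{\fH},\eU,\eV)}$ into $M$ carrying the base point to $p$, intertwining the two actions. On $(\fG/\fH)_{(\eU_{\fH},\eU,\eV)}$ I would now produce an invariant real analytic Riemannian metric $\tilde g$ whose image under $\q_{x_{\zero}}$ agrees with $g$: since $G$ acts transitively by isometries for $g$ and the action is $\cC^2$, pulling $g$ back through $\q_{x_{\zero}}$ and using $G$-invariance, the metric $\tilde g = (\q_{x_{\zero}})^*g$ on the orbit is determined by its value at the base point together with the (real analytic) action of $\fG$ on the local factor space, exactly as in the proof of Theorem \ref{reghom}; hence $\tilde g$ is real analytic. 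Finally, choosing a real analytic chart $\chi$ on $(\fG/\fH)_{(\eU_{\fH},\eU,\eV)}$ near the base point and setting $\f_{x_{\zero}} = \chi \circ (\q_{x_{\zero}})^{-1}$ on a small $U_{x_{\zero}}$ gives an open $\cC^2$-embedding into $\bR^m$ with $(\f_{x_{\zero}}^{-1})^*g = \chi_*\tilde g$ real analytic.

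The main obstacle I anticipate is the step asserting that $(\q_{x_{\zero}})^*g$ is real analytic: one must check carefully that, although $\q_{x_{\zero}}$ is only $\cC^2$, the $G$-invariance of the pulled-back metric on the local factor space, combined with the real analyticity of the $G$-action there and of the exponential-type coordinates, forces the metric coefficients to be real analytic — i.e. that an invariant metric under a transitive real analytic local action, which is a priori only continuous, is automatically real analytic because it is reconstructed algebraically from its value at one point and the analytic action (as in the homogeneous case, using that the isotropy acts on the tangent space and the metric is the push-forward of a fixed inner product along analytic maps). This requires being slightly careful about the local (rather than global) nature of the action and the fact that the orbit through the base point is a full neighborhood by transitivity, so that the reconstruction covers an open set; everything else is a routine unwinding of Propositions \ref{locfact} and \ref{loceqG}.
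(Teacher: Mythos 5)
Your proposal follows essentially the same route as the paper: reduce to the distinguished point $p$ by local homogeneity, upgrade $G$ to a local Lie group of $\cC^2$-transformations via Theorems \ref{MAIN1} and \ref{Isoreg}, pass to the local factor space $(\fG/\fH)_{(\eU_{\fH},\eU,\eV)}$ through Propositions \ref{locfact} and \ref{loceqG}, and transport the unique invariant real analytic metric $\tilde g$ back through the $\cC^2$-equivalence, exactly as the paper does (and as in the proof of Theorem \ref{reghom}). The only slip is the phrase ``the metric $g$ is of class $\cC^2$'' --- you mean, as your parenthetical makes clear, that the \emph{isometries} are of class $\cC^2$ while $g$ itself is only assumed $\cC^1$.
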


\begin{proof} Since $(M,g)$ is locally homogeneous, it is sufficient to prove the claim for $x_{\zero}=p$. By means of Theorem \ref{MAIN1} and Theorem \ref{Isoreg}, $G$ is a local Lie group of isometries and the map $\Q$ is of class $\cC^2$. Then, let $\fH$ be the abstract isotropy of $G$ at $p$ and pick an admissible triple $(\eU_{\fH}, \eU, \eV)$ for $\fH$ in $\fG$. By means of Proposition \ref{loceqG}, $G$ is locally $\cC^2$-equivalent to the local Lie group $G_{(\fG,\fH),(\eU_{\fH},\eU,\eV)}$ of $\cC^{\w}$-transformations on the local factor space $(\fG/\fH)_{(\eU_{\fH},\eU,\eV)}$. Since $G$ acts on $(M,g,p)$ by isometries, there exists a unique $\cC^{\w}$-Riemannian metric $\tilde{g}$ on $(\fG/\fH)_{(\eU_{\fH},\eU,\eV)}$ which makes the map $\q_{x_{\zero}}:\pi_{(\eU_{\fH},\eU,\eV)}(\eU_{\zero}) \rar M$ defined in the proof of Proposition \ref{loceqG} a local isometry. \end{proof}

We may now conclude the proof of Theorem \ref{MAIN2}. By Lemma \ref{AC2lh}, there exists a $\cC^2$-atlas $\eA=\{(U_\a,\xi_\a)\}$ on $M$ such that the metric $g$ is real analytic with respect to each coordinate chart $(U_\a,\xi_\a) \in \eA$. But then by \cite[Lemma 1.2]{DK} there exists a $\cC^{\w}$-atlas $\eA'$ on $M$ which is compatible with $\eA$ and with respect to which $g$ is real analytic.

\bigskip\bigskip\bigskip
\font\smallsmc = cmcsc8
\font\smalltt = cmtt8
\font\smallit = cmti8
\hbox{\parindent=0pt\parskip=0pt
\vbox{\baselineskip 9.5 pt \hsize=5truein
\obeylines
{\smallsmc
Dipartimento di Matematica e Informatica ``Ulisse Dini'', Universit$\scalefont{0.55}{\text{\Aac}}$ di Firenze
Viale Morgagni 67/A, 50134 Firenze, ITALY}
\smallskip
{\smallit E-mail adress}\/: {\smalltt francesco.pediconi@unifi.it
}
}
}

\end{document}